\DeclareMathOperator{\tr}{tr}
\newcommand{\ignore}[1]{}
\newtheorem{remark}{Remark}
\newcommand{\norm}[1]{\|#1\|}
\newcommand{\abs}[1]{|#1|}
\newcommand{\uu}{{\bf u}}
\newtheorem{theorem}{Theorem}[section]
\newtheorem{lemma}[theorem]{Lemma}
\newtheorem{assumption}[theorem]{Assumption}
\newtheorem{corollary}[theorem]{Corollary}
\theoremstyle{definition}
\newtheorem{definition}[theorem]{Definition}
\theoremstyle{remark}
\theoremstyle{thmstyleone}%
\theoremstyle{thmstyletwo}%
\theoremstyle{thmstylethree}%
\begin{document}

\title[Randomized CholeskyQR for sparse matrices]{Analysis of randomized CholeskyQR for sparse matrices}


\author*[1]{\fnm{Haoran} \sur{Guan}}\email{21037226R@connect.polyu.hk}

\author[2]{\fnm{Yuwei} \sur{Fan}}\email{fanyuwei2@huawei.com}

\affil*[1]{\orgdiv{Department of Applied Mathematics}, \orgname{The Hong Kong Polytechnic University}, \orgaddress{\street{Hung Hom}, \city{Kowloon}, \postcode{999077}, \state{Hong Kong SAR}, \country{China}}}

\affil[2]{\orgdiv{Theory Lab}, \orgname{Huawei Leibniz Research Center}, \orgaddress{\street{Sha Tin}, \city{New Territories}, \postcode{999077}, \state{Hong Kong SAR}, \country{China}}}


\abstract{This work is about rounding error analysis of randomized CholeskyQR-type algorithms for sparse matrices. We often encounter QR factorization of the sparse matrices in many real problems. In this work, we focus on some typical CholeskyQR-type algorithms with matrix sketching, which is a popular randomized technique in recent years. We build a new model of the sparse matrices and provide rounding error analysis of randomized CholeskyQR-type algorithms for the sparse cases with this model. We make comparison between the bounds with different models of sparsity both theoretically and experimentally. Numerical experiments show some new phenomena of randomized CholeskyQR-type algorithms for the sparse cases, which do not occur in the common sparse cases. We also test the applicability, accuracy, efficiency and robustness of randomized CholeskyQR-type algorithms for sparse matrices.}

\keywords{QR factorization, Rounding error analysis, Sparse matrices, Numerical linear algebra.}



\maketitle

\section{Introduction}
Let us start with the basic form of ChoelskyQR. CholeskyQR has received much attention in recent years. Among all the algorithms for QR factorization, CholeskyQR has good accuracy and efficiency compared to Householder and MGS. Moreover, it uses BLAS3 and requires significantly fewer reductions in a parallel environment than the other algorithms. CholeskyQR is shown in Algorithm~\ref{alg:cholqr}. CholeskyQR deals mainly with tall-skinny matrices. For the input $X \in \mathbb{R}^{m\times n}$ with $m \ge n$, we often have $\mbox{rank}(X)=n$. 

\begin{algorithm}[H]
\caption{$[Q,R]=\mbox{CholeskyQR}(X)$}
\label{alg:cholqr}
\begin{algorithmic}[1]
\REQUIRE $X \in \mathbb{R}^{m\times n}.$ 
\ENSURE \mbox{Orthogonal factor} $Q \in \mathbb{R}^{m\times n}$, \mbox{Upper triangular factor} $R \in \mathbb{R}^{n \times n}.$ 
\STATE $G = X^{\top}X,$
\STATE $R = \mbox{Cholesky}(G),$
\STATE $Q = XR^{-1}.$
\end{algorithmic}
\end{algorithm}%

\subsection{Development of CholeskyQR}
Although with many advantages, a single CholeskyQR is seldom used directly. With the existence of rounding errors in each step of the algorithm, CholeskyQR does not have good accuracy in orthogonality. Therefore, CholeskyQR2 \cite{2014, error} has been developed to avoid this problem by repeating CholeskyQR twice. However, CholeskyQR2 fails to deal with many ill-conditioned cases due to its Cholesky factorization and rounding errors in the first two steps. To address this issue, researchers construct Shifted CholeskyQR (SCholeskyQR) \cite{Shifted, Columns} with a shifted item in Cholesky factorization as a preconditioning step to avoid numerical breakdown when encountering ill-conditioned cases. The corresponding Shifted CholeskyQR3 (SCholeskyQR3) with CholeskyQR2 afterwards guaranties the accuracy of both orthogonality and residual. However, the existence of the shifted item influences the applicability of SCholeskyQR3. In recent years, with the fast development of randomized numerical linear algebra, \textit{e.g.}, matrix sketching, some randomized CholeskyQR-type algorithms have occurred, including \cite{Novel, Randomized, Householder, LHC}. They combine CholeskyQR with other algorithms and construct some mixed randomized algorithms to overcome the difficulty of applicability, utilizing matrix sketching to accelerate the algorithm. 

\subsection{Our considerations and contributions}
Randomized linear algebra has developed rapidly over the past several years. The idea of using randomized techniques in CholeskyQR originates in \cite{Novel}. More recently, a technique called matrix sketching has been widely used. The first two steps of CholeskyQR as shown in Algorithm~\ref{alg:cholqr} are to generate the upper-triangular $R$-factor. Then, the $Q$-factor is calculated. The primary problem of the original CholeskyQR lies in the method of receiving the $R$-factor. Among the existing algorithms for randomized CholeskyQR, some new ways to receive the $R$-factor include HouseholderQR and its connections to LU-factorization. Although with better applicability in the cases with very poor conditioning, CholeskyQR-type algorithms with a direct HouseholderQR are more efficient and can deal with most of the cases when the input matrix $X \in \mathbb{R}^{m\times n}$ satisfies $\kappa_{2}(X) \le \frac{1}{\uu}$. Therefore, to simplify the discussion, we primarily focus on CholeskyQR-type algorithms without LU factorization, including CholeskyQR2 \cite{2014, error}, SCholeskyQR3 \cite{Shifted, Columns} and Rand.Householder-Cholesky (RHC) \cite{Householder}. In fact, randomized CholeskyQR-type algorithms often use matrix sketching for the input $X$, lowering the dimension of the problem to accelerate the algorithms. This strategy can also be used in CholeskyQR2 and SCholeskyQR3 as a preconditioning step for the input $X$. Therefore, we focus on the structure of randomized CholeskyQR with matrix sketching for the input $X \in \mathbb{R}^{m\times n}$ in this work. We set $\mbox{rank}(X)=n$.

For the input matrix $X \in \mathbb{R}^{m\times n}$, if $m$ and $n$ are large, $X$ is always sparse. In many circumstances, sparse matrices exhibit different properties compared to dense matrices. Due to the structure of randomized CholeskyQR-type algorithms, we are very interested in exploring the connection between the sparse matrices and randomized CholeskyQR. Most of the existing works on CholeskyQR focus on more general cases, without considering the structure of the input $X$. In \cite{CSparse}, we discuss SCholeskyQR for the sparse cases. A different $s$ based on the number of non-zero elements and the element with the highest absolute value of $X$ is taken in order to improve applicability and accuracy. However, there is overestimation in the theoretical analysis and we do not consider the distribution of the non-zero elements and their properties. In addition, the idea of error estimation is not suitable for the randomized algorithms. Although we have to simplify the choice of the shifted item $s$ in order to guaranty the efficiency of the algorithm, we still hope to provide improved error analysis of randomized CholeskyQR-type algorithms for the sparse cases. It is also meaningful for us to explore the connection between matrix sketching and sparse matrices. 

In this work, we provide a new error analysis of randomized CholeskyQR-type algorithms for sparse matrices. To the best of our knowledge, our work is the first to explore the connection between randomized techniques and sparse matrices in rounding error analysis. We build a new model of the sparse matrices based on the existence of dense columns and the key elements of different types of columns. Using such a model, we present a new theoretical analysis of randomized CholeskyQR-type algorithms. Similar ideas of analysis can be extended to deterministic cases. We make a comparison between the error bounds using different models, both theoretically and experimentally. Numerical experiments show some distinguished phenomena of randomized CholeskyQR-type algorithms, which do not occur in the dense cases. We also test the properties of randomized CholeskyQR-type algorithms for the sparse cases, including the applicability, accuracy, efficiency, and robustness. The primary highlight of this work is the progress in theoretical analysis of randomized CholeskyQR-type algorithms for sparse matrices, providing an alternative analysis compared to the existing works.

\subsection{Outline of this work and notations}
This work is organized as follows. In the beginning, we show some theoretical results in the previous works which will be used in the theoretical analysis or comparison of this work in Section~\ref{sec:pre}. In Section~\ref{sec:ch2}, we provide a rounding error analysis of randomized CholeskyQR-type algorithms for sparse matrices. Some extensions of the ideas in this work to the deterministic CholeskyQR-type algorithms and some discussions are shown in Section~\ref{sec:extensions}. Numerical experiments are detailed in Section~\ref{sec:numerical}. Finally, we conclude with a summary of the results in Section~\ref{sec:conclusions}.

We give some notations in the following. $\norm{\cdot}_{F}$ and $\norm{\cdot}_{2}$ denote the Frobenius norm and the $2$-norm of the matrix. For $X \in \mathbb{R}^{m\times n}$ with $m \ge n$, $\norm{X}_{2}=\sigma_{1}(X)$, where $\sigma_{i}(X)$ denotes the $i$-th greatest singular value of $X$, $i=1,2,\cdots,n$. $\kappa_{2}(X)=\frac{\norm{X}_{2}}{\sigma_{n}(X)}$ is the condition number of $X$. $\uu=2^{-53}$ is the unit roundoff. For the input matrix $X$, $\abs{X}$ is the matrix whose elements are all the absolute values of the elements of $X$. $fl(\cdot)$ denotes the computed value in floating-point arithmetic. $I_{n} \in \mathbb{R}^{n\times n}$ is the identity matrix and $X^{\top}$ is the transpose of $X$. $\mbox{nnze}(\cdot)$ denotes the number of non-zero elements in the matrix.

\section{Some theoretical results from the literature}
\label{sec:pre}
Before presenting rounding error analysis of randomized CholeskyQR-type algorithms for sparse matrices, we show some theoretical results from the existing works, which will be used in the theoretical analysis of this work. 

\subsection{Lemmas for rounding error analysis}
Here, we show some lemmas regarding the rounding error analysis in \cite{Perturbation, Higham}, which are important in the analysis of this work.

\begin{lemma}[Weyl's Theorem for singular values]
\label{lemma 2.1}
If $A,B \in \mathbb{R}^{m\times n}$, then
\begin{equation}
\sigma_{min}(A+B) \ge \sigma_{min}(A)-\norm{B}_{2}. \nonumber
\end{equation}
\end{lemma}

\begin{lemma}[Rounding error in matrix multiplications]
\label{lemma 2.2}
For $A \in \mathbb{R}^{m\times n}, B \in \mathbb{R}^{n\times p}$, the error in computing the matrix product $AB$ in floating-point arithmetic is bounded by
\begin{equation}
\abs{AB-fl(AB)}\le \gamma_{n}\abs{A}\abs{B}. \nonumber
\end{equation}
Here, $\abs{A}$ is the matrix whose $(i,j)$ element is $\abs{a_{ij}}$ and
\begin{equation}
\gamma_n: = \frac{n{\uu}}{1-n{\uu}} \le 1.02n{\uu}. \nonumber
\end{equation}
\end{lemma}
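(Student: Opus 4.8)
The statement is the classical entrywise rounding-error bound for a matrix product, and the plan is to reduce it to the standard backward-error estimate for floating-point inner products. Writing $C = AB$, the $(i,j)$ entry is the inner product $c_{ij} = \sum_{k=1}^{n} a_{ik} b_{kj}$ of the $i$-th row of $A$ with the $j$-th column of $B$, so it suffices to control $\abs{fl(c_{ij}) - c_{ij}}$ for each fixed pair $(i,j)$ and then reassemble the entrywise bounds into a single matrix inequality.

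First I would fix the evaluation order: form the $n$ products $p_{k} = fl(a_{ik} b_{kj})$ and accumulate them recursively by $s_{1} = p_{1}$ and $s_{\ell} = fl(s_{\ell-1} + p_{\ell})$ for $\ell = 2, \dots, n$, so that $fl(c_{ij}) = s_{n}$. Applying the standard model of floating-point arithmetic, $fl(a \pm b) = (a \pm b)(1+\delta)$ and $fl(ab) = ab(1+\delta)$ with $\abs{\delta} \le \uu$, to each of these $n$ multiplications and $n-1$ additions and unwinding the recursion yields $fl(c_{ij}) = \sum_{k=1}^{n} a_{ik} b_{kj}\,(1 + \theta_{k})$, where each $\theta_{k}$ is a product of at most $n$ factors of the form $(1+\delta)^{\pm 1}$ with $\abs{\delta} \le \uu$: the leading term $a_{i1}b_{1j}$ collects one multiplication error and all $n-1$ addition errors, while every later term collects strictly fewer.

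The remaining ingredient is the accumulation lemma: if $\abs{\delta_{r}} \le \uu$ for $r = 1, \dots, q$ and $q\uu < 1$, then $\prod_{r=1}^{q} (1+\delta_{r})^{\rho_{r}} = 1 + \theta$ with $\rho_{r} = \pm 1$ and $\abs{\theta} \le q\uu/(1 - q\uu) = \gamma_{q}$; this follows from a short induction on $q$ together with the elementary inequality $(1-\uu)^{-1} \le 1 + \uu/(1-\uu)$. Since here $q \le n$ and $\gamma_{q}$ is increasing in $q$, we obtain $\abs{\theta_{k}} \le \gamma_{n}$ for every $k$, hence $\abs{fl(c_{ij}) - c_{ij}} \le \gamma_{n} \sum_{k=1}^{n} \abs{a_{ik}}\,\abs{b_{kj}} = \gamma_{n}\,(\abs{A}\,\abs{B})_{ij}$. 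Collecting these estimates over all $(i,j)$ gives $\abs{AB - fl(AB)} \le \gamma_{n}\abs{A}\abs{B}$, and the bound $\gamma_{n} \le 1.02\,n\uu$ follows because $n\uu \le 1/64$ (a consequence of \eqref{eq:31}), whence $1/(1-n\uu) \le 64/63 < 1.02$.

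The one delicate point is the bookkeeping in the second and third steps: one has to verify that the recursive summation distributes exactly $n$ rounding factors onto the dominant term $a_{i1}b_{1j}$ and fewer onto the others, so that the clean constant $\gamma_{n}$ --- rather than $\gamma_{2n-1}$, the naive count of all $2n-1$ elementary operations --- emerges. Beyond that, the argument is a routine application of the standard model of floating-point arithmetic, and indeed the result is stated in this form in \cite{Higham}.
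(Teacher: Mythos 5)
Your proof is correct and is the standard textbook argument; the paper itself offers no proof of this lemma, simply quoting it from \cite{Higham}, so there is nothing in the source to compare against beyond the citation. Your reduction to the inner-product case, the backward-error accumulation lemma $\prod_{r}(1+\delta_{r})^{\rho_{r}} = 1+\theta$ with $\abs{\theta}\le\gamma_{q}$, and the observation that each term passes through at most $n$ (not $2n-1$) rounding operations are exactly the ingredients of Higham's proof. One small bookkeeping slip: it is not true that every term after $a_{i1}b_{1j}$ collects strictly fewer than $n$ rounding factors --- the term $a_{i2}b_{2j}$ also collects $n$ of them (one multiplication plus the $n-1$ additions forming $s_{2},\dots,s_{n}$), and only the terms with $k\ge 3$ collect strictly fewer. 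This does not affect the conclusion, since every $\theta_{k}$ is still a product of at most $n$ factors and hence $\abs{\theta_{k}}\le\gamma_{n}$. The final numerical step is also fine: the paper's standing assumptions \eqref{eq:31}--\eqref{eq:32} give $n\uu\le 1/64$, so $1/(1-n\uu)\le 64/63<1.02$.
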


\begin{lemma}[Rounding error in Cholesky factorization]
\label{lemma 2.3}
If Cholesky factorization applied to the symmetric positive definite $A \in \mathbb{R}^{n\times n}$ runs to completion, then the computed factor $R \in \mathbb{R}^{n\times n}$ satisfies
\begin{equation}
R^{\top}R=A+\Delta{A}, \quad \abs{\Delta A}\le \gamma_{n+1}\abs{{R}^{\top}}\abs{R}. \nonumber
\end{equation}
\end{lemma}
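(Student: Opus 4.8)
The plan is to carry out a componentwise backward error analysis of the Cholesky recurrences, following the classical treatment in \cite{Higham}. Writing $R=(r_{ij})$, the factorization $R^{\top}R=A$ is computed column by column: for $j=1,\dots,n$ one forms $r_{ij}=\bigl(a_{ij}-\sum_{k=1}^{i-1}r_{ki}r_{kj}\bigr)/r_{ii}$ for $i<j$, and $r_{jj}=\bigl(a_{jj}-\sum_{k=1}^{j-1}r_{kj}^{2}\bigr)^{1/2}$. Since $A$ is symmetric positive definite and the algorithm runs to completion, every square-root argument above is positive and every $r_{ii}$ is nonzero, so the analysis applies to all entries. The idea is to analyze each of these scalar recurrences with the standard model $\fl{a\ \mathrm{op}\ b}=(a\ \mathrm{op}\ b)(1+\delta)$, $\abs{\delta}\le\uu$, collect the perturbations into a matrix $\Delta A$, and bound it componentwise by $\gamma_{n+1}\abs{R^{\top}}\abs{R}$.

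First I would treat the off-diagonal entries. Fix $i<j$ and let $\hat r_{ij}$ be the computed value. Applying the standard error bound for an inner product accumulated in the natural order, together with the subtraction from $a_{ij}$ and the division by $\hat r_{ii}$, there exist $\theta,\theta_{1},\dots,\theta_{i-1}$ with $\abs{\theta},\abs{\theta_{k}}\le\gamma_{i}$ such that $\hat r_{ii}\hat r_{ij}(1+\theta)=a_{ij}-\sum_{k=1}^{i-1}\hat r_{ki}\hat r_{kj}(1+\theta_{k})$. Rearranging yields $\sum_{k=1}^{i}\hat r_{ki}\hat r_{kj}=a_{ij}+\Delta a_{ij}$ with $\abs{\Delta a_{ij}}\le\gamma_{i}\sum_{k=1}^{i}\abs{\hat r_{ki}}\,\abs{\hat r_{kj}}$. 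Because $\hat R$ is upper triangular, $\sum_{k=1}^{i}\abs{\hat r_{ki}}\abs{\hat r_{kj}}=(\abs{\hat R^{\top}}\abs{\hat R})_{ij}$ exactly, so $\abs{\Delta a_{ij}}\le\gamma_{i}(\abs{\hat R^{\top}}\abs{\hat R})_{ij}\le\gamma_{n+1}(\abs{\hat R^{\top}}\abs{\hat R})_{ij}$ since $i\le n$.

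Next I would handle the diagonal entries, where an extra square root enters. Writing $\fl{\sqrt{x}}=\sqrt{x}(1+\delta)$ with $\abs{\delta}\le\uu$ gives $\hat r_{jj}^{2}=\bigl(a_{jj}-\sum_{k=1}^{j-1}\hat r_{kj}^{2}(1+\theta_{k})\bigr)(1+\theta)$ for suitable $\theta,\theta_{k}$; absorbing the two factors $(1+\delta)$ coming from squaring the computed square root into the accumulated error shows one may take $\abs{\theta},\abs{\theta_{k}}\le\gamma_{j+1}$. Hence $\sum_{k=1}^{j}\hat r_{kj}^{2}=a_{jj}+\Delta a_{jj}$ with $\abs{\Delta a_{jj}}\le\gamma_{j+1}\sum_{k=1}^{j}\abs{\hat r_{kj}}^{2}=\gamma_{j+1}(\abs{\hat R^{\top}}\abs{\hat R})_{jj}\le\gamma_{n+1}(\abs{\hat R^{\top}}\abs{\hat R})_{jj}$. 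Collecting the diagonal and off-diagonal estimates, and using that $\Delta A=\hat R^{\top}\hat R-A$ is symmetric to cover the strictly lower triangle, gives $\hat R^{\top}\hat R=A+\Delta A$ with $\abs{\Delta A}\le\gamma_{n+1}\abs{\hat R^{\top}}\abs{\hat R}$, which is the assertion with $R:=\hat R$.

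The only delicate point, and the main obstacle, is the bookkeeping of the $\gamma$-indices: one must verify that an inner product of length at most $i-1$, a subtraction, and a division together contribute no more than $\gamma_{i}$, and that on the diagonal the additional square root pushes the index up by exactly one, to $\gamma_{j+1}\le\gamma_{n+1}$, and no further. This is the standard counting-of-rounding-errors argument; no structural difficulty arises beyond keeping the constants from being overestimated, and the positive-definiteness hypothesis is precisely what guarantees the recurrence never breaks down, so the componentwise analysis is valid for every entry.
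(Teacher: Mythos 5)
The paper does not prove this lemma at all: it is quoted as a known result from the cited references (it is Theorem~10.3 in Higham's book), so there is no in-paper argument to compare against. Your reconstruction is the standard componentwise backward-error analysis of the Cholesky recurrences and is correct, including the index bookkeeping that yields $\gamma_{i}\le\gamma_{n+1}$ off the diagonal and $\gamma_{j+1}\le\gamma_{n+1}$ on the diagonal where the square root contributes the extra unit.
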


\subsection{Matrix sketching}
\label{sec:ms}
In this section, we show some existing results regarding matrix sketching. We start with the definition of an $\epsilon$-subspace embedding \cite{rgs, Fast, 4031351}. 

\begin{definition}[$\epsilon$-subspace embedding]
\label{definition 1}
When there is an $\epsilon$ which satisfies $0 \le \epsilon <1$, the sketch matrix $\Omega \in \mathbb{R}^{s\times n}$ is an $\epsilon$-subspace embedding for the subspace $\mathcal{K} \subset \mathbb{R}^{n}$, if for any $x,y \in \mathcal{K}$, 
\begin{equation}
\abs{\langle x,y \rangle-\langle \Omega x,\Omega y \rangle} \le \epsilon\norm{x}_{2}\norm{y}_{2}. \nonumber
\end{equation}
Here, $\langle \cdot,\cdot \rangle$ is the Euclidean inner product for the vectors. 
\end{definition}

$\epsilon$-subspace embeddings, which require knowledge of the subspace $\mathcal{K}$. In contrast, a concept known as the $(\epsilon,p,n)$ oblivious $l_{2}$-subspace embedding \cite{rgs} does not require this knowledge.

\begin{definition}[$(\epsilon,p,n)$ oblivious $l_{2}$-subspace embedding]
\label{definition 2}
When $\Omega \in \mathbb{R}^{s\times m}$ is an $\epsilon$-subspace embedding for all the fixed $n$-dimensional subspace $\mathcal{K} \subset \mathbb{R}^{m}$ with probability at least $1-p$, it is an $(\epsilon,p,n)$ oblivious $l_{2}$-subspace embedding.
\end{definition}

In the following, we list some properties regarding matrix-sketching based on Definition~\ref{definition 2}.

\begin{lemma}[Some properties of matrix sketching]
\label{lemma 22}
If $\Omega \in \mathbb{R}^{s\times m}$ is a $(\epsilon,p,n)$ oblivious $l_{2}$-subspace embedding in $\mathbb{R}^{m}$, then for any $n$-dimensional subspace $\mathcal{K} \subset \mathbb{R}^{m}$ and $X \in \mathbb{R}^{m\times n}$, we have
\begin{align}
\sqrt{1-\epsilon} \cdot \norm{X}_{2} &\le \norm{\Omega X}_{2} \le \sqrt{1+\epsilon} \cdot \norm{X}_{2}, \label{eq:222} \\
\sqrt{1-\epsilon} \cdot \norm{X}_{F} &\le \norm{\Omega X}_{F} \le \sqrt{1+\epsilon} \cdot \norm{X}_{F}, \label{eq:2f} \\
\sqrt{1-\epsilon} \cdot \sigma_{min}(X) &\le \sigma_{min}(\Omega X) \le \norm{\Omega X}_{2} \le \sqrt{1+\epsilon} \cdot \norm{X}_{2}, \label{eq:2e} \\
\frac{\sigma_{min}(\Omega X)}{\sqrt{1+\epsilon}} &\le \sigma_{min}(X) \le \norm{X}_{2} \le \frac{\norm{\Omega X}_{2}}{\sqrt{1-\epsilon}}, \label{eq:2g} 
\end{align}
with probability at least $1-p$.
\end{lemma}

There are several different methods of matrix sketching, including the Gaussian sketch, the CountSketch and SRHT, \textit{etc}. For more about matrix sketching, readers can refer to \cite{rgs, pmlr, Fast, Estimating, pylspack} and their references. Among all the methods of matrix sketching, the Gaussian sketch is the most common one. The sketching matrix of the Gaussian sketch is defined as $\Omega=\frac{1}{s}G$, where $G \in \mathbb{R}^{s\times m}$ is a Gaussian matrix. The sketching size $s$ \cite{4031351} satisfies
\begin{equation}
s=z \cdot \frac{log n \cdot log{\frac{1}{p}}}{\epsilon^{2}}. \label{eq:e1}
\end{equation}
Here, $z$ is a positive constant. In many real cases, we can take $s=yn$ for the Gaussian sketch, where $y \ge 1$. In this work, we focus on randomized CholeskyQR-type algorithms with the Gaussian sketch. 

\subsection{Theoretical results of some existing CholeskyQR-type algorithms}
In this part, we present theoretical results of some representative existing CholeskyQR-type algorithms, including CholeskyQR2 \cite{error} and RHC \cite{Householder}. CholeskyQR2 and RHC are shown below in Algorithm~\ref{alg:cholqr2} and Algorithm~\ref{alg:Rand2}. RHC in \cite{Householder} utilizes a new type of matrix sketching, multi-sketching. We present the version with single-sketching here.

\begin{algorithm}
\caption{$[Q,R]=\mbox{CholeskyQR2}(X)$}
\label{alg:cholqr2}
\begin{algorithmic}[1]
\REQUIRE $X \in \mathbb{R}^{m\times n}$ with $\mbox{rank}(X)=n$.
\ENSURE \mbox{Orthogonal factor} $Q \in \mathbb{R}^{m\times n}$, \mbox{Upper triangular factor} $R \in \mathbb{R}^{n \times n}.$ 
\STATE $[W,Y]=\mbox{CholeskyQR}(X),$
\STATE $[Q,Z]=\mbox{CholeskyQR}(Q),$
\STATE $R=ZY.$
\end{algorithmic}
\end{algorithm}%

\begin{algorithm}
\caption{$[Q,R]=\mbox{RHC}(X)$}
\label{alg:Rand2}
\begin{algorithmic}[1]
\REQUIRE $X \in \mathbb{R}^{m\times n}$ with $\mbox{rank}(X)=n$.
\ENSURE \mbox{Orthogonal factor} $Q \in \mathbb{R}^{m\times n}$, \mbox{Upper triangular factor} $R \in \mathbb{R}^{n \times n}.$ 
\STATE $K=\Omega X,$
\STATE $[H,Y]=\mbox{HouseholderQR}(K),$
\STATE $W=XY^{-1}.$
\STATE $[Q,Z]=\mbox{CholeskyQR}(W),$
\STATE $R=ZY.$
\end{algorithmic}
\end{algorithm}%

In the following, we present the rounding error analysis of CholeskyQR2 and RHC in Lemma~\ref{lemma 2.9} and Lemma~\ref{lemma 2.10}. 

\begin{lemma}[Rounding error analysis of CholeskyQR2]
\label{lemma 2.9}
For $X \in \mathbb{R}^{m\times n}$ and $[Q,R]=\mbox{CholeskyQR2}(X)$, with
$\delta_{0}=8\kappa_{2}(X)\sqrt{mn\uu+n(n+1)\uu} \le 1$, $mn\uu \le \frac{1}{64}$ and $n(n+1)\uu \le \frac{1}{64}$, we have
\begin{align}
\norm{Q^{\top}Q-I}_{F} &\le 6(mn\uu+n(n+1)\uu), \label{eq:13} \\
\norm{QR-X}_{F} &\le 5n^{2}\sqrt{n}\uu\norm{X}_{2}. \label{eq:14}
\end{align}
\end{lemma}

\begin{lemma}[Rounding error analysis of RHC]
\label{lemma 2.10}
Suppose $0 \le \epsilon<\frac{308}{317}$ and $S \in \mathbb{R}^{p\times n}$ is a $(\epsilon,p,n)$ oblivious $l_{2}$-subspace embedding. Furthermore, suppose $X \in \mathbb{R}^{m\times n}$ has full rank and $1<n \le s \le m$ where $mn\uu \le \frac{1}{12}$, $s^{\frac{3}{2}}\uu \le \frac{1}{12}$ and $\delta=\frac{383(sn^{\frac{3}{2}}+\sqrt{n}(p^{\frac{3}{2}}\sqrt{1+\epsilon}+n\norm{S}_{F}}{\sqrt{1-\epsilon}}\uu\kappa_{2}(X) \le 1$. When the above assumptions are satisfied, for $[Q,R]=\mbox{RHC}(X)$, we have
\begin{align}
\norm{Q^{\top}Q-I}_{F} &\le \frac{5445}{(25\sqrt{\frac{1-\epsilon}{1+\epsilon}}-3)^{2}}(mn\uu+n(n+1)\uu), \label{eq:15} \\
\norm{QR-X}_{F} &\le (\frac{56}{25\frac{1-\epsilon}{\sqrt{1+\epsilon}}-3\sqrt{1-\epsilon}}+\frac{1.5}{\sqrt{1-\epsilon}}\sqrt{1+\frac{5445(mn\uu+n(n+1)\uu}{(25\sqrt{\frac{1-\epsilon}{1+\epsilon}}-3)^{2}}}) \nonumber \\ &\cdot (\sqrt{1+\epsilon}\norm{X}_{2}+\frac{1-\epsilon}{12}\sigma_{n}(X)\delta)n^{2}\uu+\frac{\delta}{10}\sigma_{n}(X), \label{eq:16}
\end{align}
with probability at least $1-d$.
\end{lemma}

\section{Rounding error analysis of randomized CholeskyQR for sparse matrices}
\label{sec:ch2}
In this section, we focus on randomized CholeskyQR-type algorithms. We provide the general structure of randomized CholeskyQR-type algorithms and a new model based on the structure and elements of the sparse matrices. Our analysis is based on this model.

\subsection{The general structure of randomized CholeskyQR}
As a popular technique, which is widely used in many problems of numerical linear algebra, matrix sketching can accelerate the algorithm while maintaining accuracy with high probability. The primary problem of CholeskyQR lies in the steps to generate the upper-triangular factor. In \cite{Householder}, a step of matrix sketching is in front of HouseholderQR to accelerate the algorithms. The steps of calculating the gram matrix in CholeskyQR2 and SCholeskyQR3 for tall-skinny matrices restrict the applicability of the algorithms. With the advantage of decreasing the dimension of the problem, it is applicable for us to put a step of matrix sketching before implementing the whole algorithm to improve the applicability of the algorithm. We take CholeskyQR2 as an example. We construct randomized CholeskyQR2 (RCholeskyQR2) with matrix sketching as shown in Algorithm~\ref{alg:MR}. Here, $\Omega\in \mathbb{R}^{s\times m}$ is the matrix of the Gaussian Sketch with $n \le s \le m$.

\begin{algorithm}
\caption{$[Q,R]=\mbox{RCholeskyQR2}(X)$}
\label{alg:MR}
\begin{algorithmic}
\REQUIRE $X \in \mathbb{R}^{m\times n}.$ 
\ENSURE \mbox{Orthogonal factor} $Q \in \mathbb{R}^{m\times n}$, \mbox{Upper triangular factor} $R \in \mathbb{R}^{n \times n}.$
\STATE $A=\Omega X,$
\STATE $G=A^{\top}A,$
\STATE $Y=\mbox{Cholesky}(G),$
\STATE $W=XY^{-1}.$
\STATE $[Q,Z]=\mbox{CholeskyQR}(W),$
\STATE $R=ZY.$
\end{algorithmic}
\end{algorithm}

Combining RCholeskyQR2 with RHC, we can find that randomized CholeskyQR can be summarized in the form of 'matrix sketching+algorithm'. This can be viewed as a general structure for this type of algorithms. SCholeskyQR3 with matrix sketching can be constructed in the same manner.

\subsection{A new model of the sparse matrices}
In this part, we provide a new model of sparse matrices. In \cite{CSparse}, a new shifted item $s$ based on the number of non-zero elements and the element with the largest absolute value is taken for SCholeskyQR3 for the sparse input $X$. Although such an idea guaranties the efficiency of the algorithm, it is still hard to describe the distribution of the non-zero elements and their absolute values for the sparse matrices. From the perspective of theoretical analysis in this work, we propose a new model of the sparse matrices based on the division of columns and some definitions of different types of non-zero elements. It is shown in Definition~\ref{def:41}. 

\begin{definition}[The new model of the sparse matrices based on the division of columns]
\label{def:41}
A sparse matrix $X \in \mathbb{R}^{m\times n}$ has $v$ dense columns ($0 \le v<<n$, with each dense column containing at most $t_{1}$ non-zero elements. For the sparse columns of $X$, each column has at most $t_{2}$ non-zero elements, where $0 \le t_{2}<<t_{1}$. The largest absolute value of the elements in the dense columns is taken as $c_{1}$ and the largest absolute value of the elements in the sparse columns is taken as $c_{2}$. 
\end{definition}

\subsection{RCholeskyQR2 for sparse matrices}
Here, we do rounding error analysis of randomized CholeskyQR for sparse matrices based on the model in Definition~\ref{def:41}. In this work, we focus mainly on RCholeskyQR2 in Algorithm~\ref{alg:MR}. SCholeskyQR3 and RHC for sparse matrices based on our new model can be analyzed similarly.

\subsubsection{Settings of RCholeskyQR2 for sparse matrices}
In the beginning, we write RCholeskyQR2 with error matrices in each step below.

\begin{align}
A-\Omega X &= E_{S}, \label{eq:es} \\
G-A^{\top}A &= E_{G}, \label{eq:35} \\
Y^{\top}Y-G &= E_{C}, \label{eq:36} \\
WY-X &= E_{X}, \label{eq:37} \\
C-W^{\top}W &= E_{1}, \nonumber \\
Z^{\top}Z-C &= E_{2}, \nonumber \\
QZ-W &= E_{3}, \label{eq:310} \\
ZY-R &= E_{4}. \label{eq:311}
\end{align}

For RCholeskyQR2 with matrix sketching, we focus on $(\epsilon, p,n)$ oblivious $l_{2}$-subspace embedding in Definition~\ref{definition 2}. We provide the following assumption. 

\begin{assumption}[The assumption for matrix sketching of RCholeskyQR2]
\label{assumption:2}
If $\Omega\in \mathbb{R}^{s\times m}$ is a $(\epsilon,p,n)$ oblivious $l_{2}$-subspace embedding in $\mathbb{R}^{m}$, we let
\begin{equation}
\sqrt{\frac{1-\epsilon}{1+\epsilon}} \ge \frac{116}{11}\sqrt{mn\uu+n(n+1)\uu}+\frac{3}{88}. \nonumber
\end{equation}
\end{assumption}

Moreover, when $X \in \mathbb{R}^{m\times n}$ is a sparse matrix, we set
\begin{equation}
\kappa_{2}(X) \le \delta_{1}. \label{eq:rw}
\end{equation}
Here, we define $\delta_{1}=\min(\frac{\sqrt{1-\epsilon}}{10.5k}, \frac{1}{10.5j \cdot (\sqrt{1+\epsilon}+1.02m\uu \cdot \norm{\Omega}_{F})} \cdot \sqrt{\frac{1-\epsilon}{s\uu+(n+1)\uu}})$, where $j=\frac{\sqrt{vt_{1}c_{1}^{2}+nt_{2}c_{2}^{2}}}{\norm{X}_{2}}$ and $k=1.02m\uu \cdot \norm{\Omega}_{F} \cdot j$. We have $n \le s \le m$. $c$, $v$, $t_{1}$ and $t_{2}$ are defined in Definition~\ref{def:41}. 

Also, for $X \in \mathbb{R}^{m\times n}$, we let
\begin{align}
mn\uu &\le \frac{1}{64}, \label{eq:31} \\
n(n+1)\uu &\le \frac{1}{64}. \label{eq:32}
\end{align}

\subsubsection{Theoretical results of RCholeskyQR2 for sparse matrices}
After providing the general settings and assumptions above, we show the theoretical results of RCholeskyQR2 for sparse matrices in Theorem~\ref{theorem 3.3}. Similar theoretical results hold for RHC.

\begin{theorem}[Rounding error analysis of RCholeskyQR2 for sparse matrices]
\label{theorem 3.3}
When $X \in \mathbb{R}^{m\times n}$ is a sparse matrix with $\mbox{rank}(X)=n$ and $m \ge n$, if Assumption~\ref{assumption:2} and \eqref{eq:rw}-\eqref{eq:32} are satisfied, then for $[Q,R]=\mbox{RCholeskyQR2}(X)$, we have
\begin{align}
\norm{Q^{\top}Q-I_{n}}_{F} &\le d, \label{eq:372} \\
\norm{QR-X}_{F} &\le b\norm{X}_{2}, \label{eq:382}
\end{align}
with probability at least $1-p$. Here, we define $d=5a^{2} \cdot (mn\uu+n(n+1)\uu$, $a=\frac{1.16}{0.88 \cdot \sqrt{\frac{1-\epsilon}{1+\epsilon}}-0.03}$ and $b=\frac{(1.22+1.24\sqrt{1+d})}{\sqrt{1-\epsilon}} \cdot (\sqrt{1+\epsilon}+1.02m\uu \cdot \norm{\Omega}_{F}) \cdot jn\sqrt{n}\uu+ \frac{1.24\sqrt{1+d}}{\sqrt{1-\epsilon}} \cdot (\sqrt{1+\epsilon}+k) \cdot n^{2}\uu$. $k$ and $j$ are defined in \eqref{eq:rw}. 
\end{theorem}

Theorem~\ref{theorem 3.3} demonstrates the accuracy of RCholeskyQR2 for sparse matrices. The most important task of this work is to prove it.

\subsection{Lemmas for proving Theorem~\ref{theorem 3.3}}
Before proving Theorem~\ref{theorem 3.3}, we show some lemmas regarding RCholeskyQR2 based on the assumptions and settings above. Theoretical analysis of RCholeskyQR2 is distinguished from that of CholeskyQR2 in \cite{error} due to the existence of randomized matrix sketching and our new model of sparse matrices. We follow Definition~\ref{def:41} and present an improved analysis of CholeskyQR-type algorithms for sparse matrices compared to that in \cite{CSparse}, utilizing some ideas in \cite{Householder, LHC}. 

\begin{lemma}[Estimating $\norm{E_{S}}_{2}$]
\label{lemma 3.1}
For $E_{S}$ in \eqref{eq:es}, with the settings in Definition~\ref{def:41}, we have
\begin{equation}
\begin{split}
\norm{E_{S}}_{F} &\le 1.02m\uu \cdot \norm{\Omega}_{F} \cdot \sqrt{vt_{1}c_{1}^{2}+nt_{2}c_{2}^{2}} \\ &= k\norm{X}_{2}, \label{eq:esk}
\end{split}
\end{equation}
with probability at least $1-p$. Here, $k=\frac{1.02m\uu \cdot \norm{\Omega}_{F} \cdot \sqrt{vt_{1}c_{1}^{2}+nt_{2}c_{2}^{2}}}{\norm{X}_{2}}$.
\end{lemma}
\begin{proof}
To estimate $\norm{E_{S}}_{F}$, based on Lemma~\ref{lemma 2.2}, Lemma~\ref{lemma 22} and Definition~\ref{def:41}, we can bound $\norm{E_{S}}_{F}$ as 
\begin{equation} \label{eq:es1f}
\begin{split}
\norm{E_{S}}_{F} &\le 1.02m\uu \cdot \norm{\Omega}_{F}\norm{X}_{F} \\ &\le 1.02m\uu \cdot \norm{\Omega}_{F} \cdot \sqrt{vt_{1}c_{1}^{2}+nt_{2}c_{2}^{2}}, 
\end{split}
\end{equation}
with probability at least $1-p$. Therefore, it is clear that $k=\frac{1.02m\uu \cdot \norm{\Omega}_{F} \cdot \sqrt{vt_{1}c_{1}^{2}+nt_{2}c_{2}^{2}}}{\norm{X}_{2}}$. \eqref{eq:esk} is proved.
\end{proof}

\begin{lemma}[Estimating $\norm{E_{G}}_{2}$ and $\norm{E_{C}}_{2}$]
\label{lemma 3.2}
For $E_{G}$ and $E_{C}$ in \eqref{eq:35} and \eqref{eq:36}, we have
\begin{align}
\norm{E_{G}}_{F} &\le 1.1s\uu \cdot (\sqrt{1+\epsilon}+1.02m\uu \cdot \norm{\Omega}_{F})^{2} \cdot (vt_{1}c_{1}^{2}+nt_{2}c_{2}^{2}), \label{eq:ea} \\
\norm{E_{C}}_{F} &\le 1.1(n+1)\uu \cdot (\sqrt{1+\epsilon}+1.02m\uu \cdot \norm{\Omega}_{F})^{2} \cdot (vt_{1}c_{1}^{2}+nt_{2}c_{2}^{2}), \label{eq:eb}
\end{align}
with probability at least $1-p$.
\end{lemma}
\begin{proof}
For $\norm{E_{G}}_{F}$, based on Lemma~\ref{lemma 2.2}, we can bound $\norm{E_{G}}_{F}$ as
\begin{equation}
\norm{E_{G}}_{F} \le 1.1s\uu \cdot \norm{A}_{F}^{2}. \label{eq:egf}
\end{equation}
With \eqref{eq:2f}, \eqref{eq:es}, Definition~\ref{def:41} and \eqref{eq:esk}, we can estimate $\norm{A}_{F}$ as
\begin{equation} \label{eq:af}
\begin{split}
\norm{A}_{F} &\le \norm{\Omega X}_{F}+\norm{E_{S}}_{F} \\ &\le \sqrt{1+\epsilon} \cdot \sqrt{vt_{1}c_{1}^{2}+nt_{2}c_{2}^{2}}+1.02m\uu \cdot \norm{\Omega}_{F} \cdot \sqrt{vt_{1}c_{1}^{2}+nt_{2}c_{2}^{2}} \\ &= (\sqrt{1+\epsilon}+1.02m\uu \cdot \norm{\Omega}_{F}) \cdot \sqrt{vt_{1}c_{1}^{2}+nt_{2}c_{2}^{2}},
\end{split}
\end{equation}
with probability at least $1-p$. Therefore, we put \eqref{eq:af} into \eqref{eq:egf} and we can get \eqref{eq:ea}. \eqref{eq:ea} is proved.

For $\norm{E_{C}}_{F}$, with Lemma~\ref{lemma 2.3}, \eqref{eq:35}, \eqref{eq:36} and the connection between $\norm{\cdot}_{F}$ and the trace of the matrix, we can have
\begin{equation} \label{eq:ecf}
\begin{split}
\norm{E_{C}}_{F} &\le 1.02(n+1)\uu \cdot \norm{Y}_{F}^{2} \\ &= 1.02(n+1)\uu \cdot \tr(Y^{\top}Y) \\ &= 1.02(n+1)\uu \cdot \tr(A^{\top}A+E_{G}+E_{C}) \\ &\le 1.02(n+1)\uu \cdot (\norm{A}_{F}^{2}+n\norm{E_{G}}_{F}+n\norm{E_{C}}_{F}. 
\end{split}
\end{equation}
Therefore, with \eqref{eq:31}, \eqref{eq:32}, \eqref{eq:ea}, \eqref{eq:af} and \eqref{eq:ecf}, we can bound $\norm{E_{C}}_{F}$ as
\begin{equation}
\begin{split}
\norm{E_{C}}_{F} &\le \frac{1.02(n+1)\uu \cdot (\norm{A}_{F}^{2}+n\norm{E_{G}}_{F})}{1-1.02(n+1)\uu \cdot n} \\ &\le \frac{1.02(n+1)\uu \cdot (1+1.1s_{2}\uu \cdot n)}{1-1.02(n+1)\uu \cdot n} \cdot (\sqrt{1+\epsilon}+1.1m\uu \cdot \norm{\Omega}_{F})^{2} \cdot (vt_{1}c_{1}^{2}+nt_{2}c_{2}^{2}) \\ &\le \frac{1.02(n+1)\uu \cdot (1+\frac{1}{64})}{1-\frac{1}{64}} \cdot (\sqrt{1+\epsilon}+1.1m\uu \cdot \norm{\Omega}_{F})^{2} \cdot (vt_{1}c_{1}^{2}+nt_{2}c_{2}^{2}) \\ &\le 1.1(n+1)\uu \cdot (\sqrt{1+\epsilon}+1.1m\uu \cdot \norm{\Omega}_{F})^{2} \cdot (vt_{1}c_{1}^{2}+nt_{2}c_{2}^{2}), \nonumber
\end{split}
\end{equation}
with probability at least $1-p$. \eqref{eq:eb} is proved.
\end{proof}

\begin{lemma}[Estimating $\norm{Y^{-1}}_{2}$ and $\norm{AY^{-1}}_{2}$]
\label{lemma 3.3}
For $Y^{-1}$ and $AY^{-1}$ in \eqref{eq:36}, we have
\begin{align}
\norm{Y^{-1}}_{2} &\le \frac{1.12}{\sqrt{1-\epsilon} \cdot \sigma_{n}(X)}, \label{eq:322} \\
\norm{AY^{-1}}_{2} &\le 1.01, \label{eq:323}
\end{align}
with probability at least $1-p$.
\end{lemma}
\begin{proof}
For $\norm{Y^{-1}}_{2}$, with Lemma~\ref{lemma 2.1}, \eqref{eq:35} and \eqref{eq:36}, we can get
\begin{equation}
(\sigma_{n}(Y))^{2} \ge (\sigma_{n}(A))^{2}-(\norm{E_{G}}_{2}+\norm{E_{C}}_{2}). \label{eq:ny2}
\end{equation}
For \eqref{eq:es}, with Lemma~\ref{lemma 2.1}, \eqref{eq:2e} and \eqref{eq:esk}, we can get
\begin{equation} \label{eq:signa}
\begin{split}
\sigma_{n}(A) &\ge \sigma_{n}(\Omega X)-\norm{E_{S}}_{2} \\ &\ge \sqrt{1-\epsilon} \cdot \sigma_{n}(X)-k\norm{X}_{2} \\ &\ge 0.9\sqrt{1-\epsilon} \cdot \sigma_{n}(X),
\end{split}
\end{equation}
with probability at least $1-p$. We put \eqref{eq:signa} into \eqref{eq:ny2}, together with \eqref{eq:rw}, \eqref{eq:ea} and \eqref{eq:eb}, we can get
\begin{equation} \label{eq:signa1}
\begin{split}
(\sigma_{n}(Y))^{2} &\ge (\sigma_{n}(A))^{2}-(\norm{E_{G}}_{2}+\norm{E_{C}}_{2}) \\ &\ge 0.81(1-\epsilon) \cdot (\sigma_{n}(X))^{2}-1.1s\uu \cdot (\sqrt{1+\epsilon}+1.02m\uu \cdot \norm{\Omega}_{F})^{2} \cdot (vt_{1}c_{1}^{2}+nt_{2}c_{2}^{2}) \\ &-1.1(n+1)\uu \cdot (\sqrt{1+\epsilon}+1.02m\uu \cdot \norm{\Omega}_{F})^{2} \cdot (vt_{1}c_{1}^{2}+nt_{2}c_{2}^{2}) \\ &\ge 0.8(1-\epsilon) \cdot (\sigma_{n}(X))^{2},
\end{split}
\end{equation}
with probability at least $1-p$. Therefore, with \eqref{eq:signa1}, we can bound $\norm{Y^{-1}}$ as 
\begin{equation} 
\begin{split}
\norm{Y^{-1}}_{2} &= \frac{1}{\sigma_{n}(Y)} \nonumber \\ &\le \frac{1}{\sqrt{0.8(1-\epsilon) \cdot (\sigma_{n}(X))^{2}}} \nonumber \\ &\le \frac{1.12}{\sqrt{1-\epsilon} \cdot \sigma_{n}(X)}, \nonumber 
\end{split}
\end{equation}
with probability at least $1-p$. \eqref{eq:322} is proved. 

For $\norm{AY^{-1}}_{2}$, with \eqref{eq:35} and \eqref{eq:36}, we can get
\begin{equation}
Y^{-\top}A^{\top}AY^{-1}=I_{n}-Y^{-\top}(E_{C}+E_{G})Y^{\top}. \label{eq:ya2}
\end{equation}
Based on \eqref{eq:ya2}, we can get
\begin{equation}
\norm{AY^{-1}}_{2}^{2} \le 1+\norm{Y^{-1}}_{2}^{2}(\norm{E_{C}}_{2}+\norm{E_{G}}_{2}). \label{eq:ay-12}
\end{equation}
With \eqref{eq:rw}, \eqref{eq:ea} and \eqref{eq:eb}, we can get
\begin{equation}
\norm{E_{C}}_{2}+\norm{E_{G}}_{2} \le 0.01(1-\epsilon) \cdot (\sigma_{n}(X))^{2}, \label{eq:cg2}
\end{equation}
with probability at least $1-p$. We put \eqref{eq:322} and \eqref{eq:cg2} into \eqref{eq:ay-12} and we can get
\begin{equation} 
\begin{split}
\norm{AY^{-1}}_{2} &\le \sqrt{1+\norm{Y^{-1}}_{2}^{2}(\norm{E_{C}}_{2}+\norm{E_{G}}_{2})} \\ &\le \sqrt{1+\frac{1.25}{(1-\epsilon) \cdot (\sigma_{n}(X))^{2}} \cdot 0.01(1-\epsilon) \cdot (\sigma_{n}(X))^{2}} \nonumber \\ &\le 1.01, \nonumber
\end{split}
\end{equation}
with probability at least $1-p$. \eqref{eq:323} is proved.
\end{proof}

\begin{lemma}[Estimating $\norm{E_{X}}_{2}$]
\label{lemma 3.4}
For $E_{X}$ in \eqref{eq:37}, we have
\begin{align}
\norm{E_{X}}_{F} \le 1.05n\sqrt{n}\uu \cdot \norm{W}_{2} \cdot (\sqrt{1+\epsilon}+1.02m\uu \cdot \norm{\Omega}_{F}) \cdot \sqrt{vt_{1}c_{1}^{2}+nt_{2}c_{2}^{2}}, \label{eq:327}
\end{align}
with probability at least $1-p$.
\end{lemma}
\begin{proof}
Based on Lemma~\ref{lemma 2.2} and \eqref{eq:37}, we can get
\begin{equation}
\norm{E_{X}}_{F} \le 1.02n\uu \cdot \norm{W}_{F}\norm{Y}_{F}. \label{eq:exf}
\end{equation}
With \eqref{eq:31}, \eqref{eq:32}, \eqref{eq:ea}, \eqref{eq:eb}, \eqref{eq:af} and \eqref{eq:ecf}, we can have
\begin{equation} \label{eq:nyf2}
\begin{split}
\norm{Y}_{F}^{2} &\le \norm{A}_{F}^{2}+n\norm{E_{C}}_{F}+n\norm{E_{G}}_{F} \\ &\le (\sqrt{1+\epsilon}+1.02m\uu \cdot \norm{\Omega}_{F})^{2} \cdot (vt_{1}c_{1}^{2}+nt_{2}c_{2}^{2}) \\ &+n \cdot 1.1s\uu \cdot (\sqrt{1+\epsilon}+1.02m\uu \cdot \norm{\Omega}_{F})^{2} \cdot (vt_{1}c_{1}^{2}+nt_{2}c_{2}^{2}) \\ &+ n \cdot 1.1(n+1)\uu \cdot (\sqrt{1+\epsilon}+1.02m\uu \cdot \norm{\Omega}_{F})^{2} \cdot (vt_{1}c_{1}^{2}+nt_{2}c_{2}^{2}) \\ &\le 1.04 \cdot (\sqrt{1+\epsilon}+1.02m\uu \cdot \norm{\Omega}_{F})^{2} \cdot (vt_{1}c_{1}^{2}+nt_{2}c_{2}^{2}),
\end{split}
\end{equation}
with probability at least $1-p$. With \eqref{eq:nyf2}, we can bound $\norm{Y}_{F}$ as
\begin{equation} \label{eq:nyf}
\begin{split}
\norm{Y}_{F} \le 1.02 \cdot (\sqrt{1+\epsilon}+1.02m\uu \cdot \norm{\Omega}_{F}) \cdot \sqrt{vt_{1}c_{1}^{2}+nt_{2}c_{2}^{2}},
\end{split}
\end{equation}
with probability at least $1-p$. We put \eqref{eq:nyf} into \eqref{eq:exf} and we can get
\begin{equation}
\begin{split}
\norm{E_{X}}_{F} &\le 1.02n\uu \cdot \norm{W}_{F}\norm{Y}_{F} \nonumber \\ &\le 1.02n\sqrt{n}\uu \cdot \norm{W}_{2}\norm{Y}_{F} \nonumber \\ &\le 1.05n\sqrt{n}\uu \cdot \norm{W}_{2} \cdot (\sqrt{1+\epsilon}+1.02m\uu \cdot \norm{\Omega}_{F}) \cdot \sqrt{vt_{1}c_{1}^{2}+nt_{2}c_{2}^{2}}, \nonumber
\end{split}
\end{equation}
with probability at least $1-p$. \eqref{eq:exf} is proved.
\end{proof}

\begin{lemma}[Estimating $\sigma_{n}(AY^{-1})$]
\label{lemma 3.5}
For $\sigma_{n}(AY^{-1})$ in \eqref{eq:36}, we have
\begin{equation}
\sigma_{n}(AY^{-1}) \ge 0.99, \label{eq:nay-1}
\end{equation}
with probability at least $1-p$.
\end{lemma}
\begin{proof}
To estimate $\sigma_{n}(AY^{-1})$, we take consideration of $A$ first. We do SVD for $A$ in the form of $A=U\Sigma V^{\top}$. Here, $U \in \mathbb{R}^{s\times s}$ and $V \in \mathbb{R}^{n\times n}$ are orthogonal matrices. $\Sigma \in \mathbb{R}^{s\times n}$ is the matrix with singular values of $A$. Therefore, we can have
\begin{equation}
\begin{split}
A^{\top}A &= (U\Sigma V^{\top})^{\top}(U\Sigma V^{\top}) \\ &= V\Sigma^{\top}\Sigma V^{\top}. \label{eq:ata}
\end{split}
\end{equation}
We set $D=AY^{-1}=U\Sigma V^{\top}Y^{-1}$. With \eqref{eq:35} and \eqref{eq:36}, it is easy for us to have
\begin{equation} \label{eq:dtd}
\begin{split}
D^{\top}D &= Y^{-\top}(A^{\top}A)Y^{-1} \\ &=Y^{-\top}(Y^{\top}Y+E_{G}+E_{C})Y^{-1} \\ &= I_{n}-Y^{-\top}(E_{G}+E_{C})Y^{-1}. 
\end{split}
\end{equation}
Regarding $Y^{-\top}(E_{G}+E_{C})Y^{-1}$, with \eqref{eq:322} and \eqref{eq:cg2}, we can get
\begin{equation} \label{eq:0.014}
\begin{split}
\norm{Y^{-\top}(E_{G}+E_{C})Y^{-1}}_{2} &\le \norm{Y^{-1}}_{2}^{2}(\norm{E_{G}}_{2}+\norm{E_{C}}_{2}) \\ &\le (\frac{1.12}{\sqrt{1-\epsilon} \cdot \sigma_{n}(X)})^{2} \cdot 0.01 \cdot (1-\epsilon) \cdot (\sigma_{n}(X))^{2} \\ &\le 0.02, 
\end{split}
\end{equation}
with probability at least $1-p$. With \eqref{eq:dtd} and \eqref{eq:0.014}, we can have
\begin{equation} \label{eq:ddi}
\begin{split}
\norm{D^{\top}D-I_{n}}_{2} &= \norm{Y^{-\top}(E_{G}+E_{C})Y^{-1}}_{2} \\ &\le 0.02,
\end{split}
\end{equation}
with probability at least $1-p$. With \eqref{eq:ddi}, we can get
\begin{equation}
\norm{D}_{2} \le 1.01, \label{eq:d2} 
\end{equation}
with probability at least $1-p$. If we do SVD for $D$ in the form of $D=MBN^{\top}$ with $B=L+E$, where $L \in \mathbb{R}^{s\times n}$ is the diagonal matrix with all the elements on the diagonal $1$, with \eqref{eq:d2}, we can bound $\norm{E}_{2}$ as
\begin{equation}
\norm{E}_{2} \le 0.01, \label{eq:ne2}
\end{equation}
with probability at least $1-p$. We combine SVD of $D$ with $D=AY^{-1}=U\Sigma V^{\top}Y^{-1}$ and we can get
\begin{equation}
MBN^{\top}=U\Sigma V^{\top}Y^{-1}. \label{eq:equa}
\end{equation}
Since $U$ is orthogonal, it is easy to see that $\sigma_{n}(AY^{-1})=\sigma_{n}(D)=\sigma_{n}(\Sigma V^{\top}Y^{-1})$. With \eqref{eq:equa}, we can get
\begin{equation}
\Sigma V^{\top}Y^{-1}=U^{\top}MBN^{\top}. \label{eq:vy-1}
\end{equation}
When $U$, $M$ and $N$ are all orthogonal matrices, with Lemma~\ref{lemma 2.1}, \eqref{eq:ne2} and \eqref{eq:vy-1}, we can get
\begin{equation} 
\begin{split}
\sigma_{n}(AY^{-1}) &= \sigma_{n}(\Sigma V^{\top}Y^{-1}) \nonumber \\ &= \sigma_{n}(B) \nonumber \\ &\ge \sigma_{n}(L)-\norm{E}_{2} \nonumber \\ &\ge 1-0.01 \nonumber \\ &\ge 0.99, \nonumber
\end{split}
\end{equation}
with probability at least $1-p$. \eqref{eq:nay-1} is proved.
\end{proof}

\subsection{Proof of Theorem~\ref{theorem 3.3}}
With Lemma~\ref{lemma 3.1}-Lemma~\ref{lemma 3.5}, we begin to prove Theorem~\ref{theorem 3.3}. The idea of this proof is similar to those in \cite{Householder, LHC}.

\begin{proof}
The proof of Theorem~\ref{theorem 3.3} is divided into two parts, orthogonality and residual. 

\subsubsection{The upper bound of orthogonality}
In the beginning, we consider the orthogonality. We primarily focus on $\kappa_{2}(W)$. Since $\kappa_{2}(W)=\frac{\norm{W}_{2}}{\sigma_{n}}$, we estimate $\norm{W}_{2}$ and $\sigma_{n}(W)$ separately. With \eqref{eq:37}, we can have
\begin{equation}
\norm{W}_{2} \le \norm{XY^{-1}}_{2}+\norm{E_{X}Y^{-1}}_{2}. \label{eq:w2}
\end{equation}
According to \eqref{eq:es}, it is clear to see that
\begin{equation}
AY^{-1}=\Omega_{2}\Omega_{1}XY^{-1}+E_{S}Y^{-1}. \label{eq:ay-1e}
\end{equation}
Based on \eqref{eq:ay-1e}, we can get
\begin{equation} \label{eq:nooxy-1}
\begin{split}
\norm{\Omega_{2}\Omega_{1}XY^{-1}}_{2} &\le \norm{E_{S}Y^{-1}}_{2}+\norm{AY^{-1}}_{2} \\ &\le \norm{E_{S}}_{2}\norm{Y^{-1}}_{2}+\norm{AY^{-1}}_{2}. 
\end{split}
\end{equation}
We put \eqref{eq:esk}, \eqref{eq:322} and \eqref{eq:323} into \eqref{eq:nooxy-1} and based on \eqref{eq:rw}, we can have
\begin{equation} \label{eq:ooxy-12}
\begin{split}
\norm{\Omega_{2}\Omega_{1}XY^{-1}}_{2} &\le \norm{E_{S}}_{2}\norm{Y^{-1}}_{2}+\norm{AY^{-1}}_{2} \\ &\le \frac{1.12}{\sqrt{1-\epsilon} \cdot \sigma_{n}(X)} \cdot k\norm{X}_{2}+1.01 \\ &= \frac{1.12}{\sqrt{1-\epsilon}} \cdot k\kappa_{2}(X)+1.01 \\ &\le 1.13, 
\end{split}
\end{equation}
with probability at least $1-p$. With \eqref{eq:2e} and \eqref{eq:ooxy-12}, we can bound $\norm{XY^{-1}}_{2}$ as
\begin{equation}
\norm{XY^{-1}}_{2} \le \frac{1.13}{\sqrt{1-\epsilon}}, \label{eq:xy-1}
\end{equation}
with probability at least $1-p$. With \eqref{eq:rw}, \eqref{eq:32}, \eqref{eq:322} and \eqref{eq:327}, we can bound $\norm{E_{X}Y^{-1}}_{2}$ as
\begin{equation} \label{eq:exy-12}
\begin{split}
\norm{E_{X}Y^{-1}}_{2} &\le \norm{E_{X}}_{2}\norm{Y^{-1}}_{2} \\ &\le 1.05n\sqrt{n}\uu \cdot \norm{W}_{2} \cdot (\sqrt{1+\epsilon}+1.02m\uu \cdot \norm{\Omega}_{F}) \cdot \sqrt{vt_{1}c_{1}^{2}+nt_{2}c_{2}^{2}} \cdot \frac{1.12}{\sqrt{1-\epsilon} \cdot \sigma_{n}(X)} \\ &= 1.05n\sqrt{n}\uu \cdot \norm{W}_{2} \cdot (\sqrt{1+\epsilon}+1.02m\uu \cdot \norm{\Omega}_{F}) \cdot j\norm{X}_{2} \cdot \frac{1.12}{\sqrt{1-\epsilon} \cdot \sigma_{n}(X)} \\ &\le \frac{1.18}{\sqrt{1-\epsilon}} \cdot n\sqrt{n}\uu \cdot \norm{W}_{2} \cdot (\sqrt{1+\epsilon}+1.02m\uu \cdot \norm{\Omega}_{F}) \cdot j\kappa_{2}(X) \\ &\le 0.02\norm{W}_{2}, 
\end{split}
\end{equation}
with probability at least $1-p$. We put \eqref{eq:xy-1} and \eqref{eq:exy-12} into \eqref{eq:w2} and we can have
\begin{equation}
\norm{W}_{2} \le 0.02\norm{W}_{2}+\frac{1.13}{\sqrt{1-\epsilon}}, \label{eq:w2i}
\end{equation}
with probability at least $1-p$. Therefore, we can bound $\norm{W}_{2}$ with \eqref{eq:w2i} as
\begin{equation}
\norm{W}_{2} \le \frac{1.16}{\sqrt{1-\epsilon}}, \label{eq:w2f}
\end{equation}
with probability at least $1-p$. Regarding $\sigma_{n}(W)$, we combine Lemma~\ref{lemma 2.1} with \eqref{eq:37} and we can get
\begin{equation}
\sigma_{n}(W) \ge \sigma_{n}(XY^{-1})-\norm{E_{X}Y^{-1}}_{2}. \label{eq:sigmanw}
\end{equation}
For $\sigma_{n}(XY^{-1})$, with \eqref{eq:rw}, \eqref{eq:esk}, \eqref{eq:322} and \eqref{eq:nay-1} , we can get
\begin{equation} \label{eq:n21xy-1}
\begin{split}
\sigma_{n}(\Omega_{2}\Omega_{1}XY^{-1}) &\ge \sigma_{n}(AY^{-1})-\norm{E_{S}Y^{-1}}_{2} \\ &\ge \sigma_{n}(AY^{-1})-\norm{E_{S}}_{2}\norm{Y^{-1}}_{2} \\ &\ge 0.99-k\norm{X}_{2} \cdot \frac{1.12}{\sqrt{1-\epsilon} \cdot \sigma_{n}(X)} \\ &\ge 0.88, 
\end{split}
\end{equation}
with probability at least $1-p$. With \eqref{eq:2g} and \eqref{eq:n21xy-1}, we can bound $\sigma_{n}(XY^{-1})$ as 
\begin{equation}
\sigma_{n}(XY^{-1}) \ge \frac{0.88}{\sqrt{1+\epsilon}}, \label{eq:nxy-1}
\end{equation}
with probability at least $1-p$. Therefore, we put \eqref{eq:exy-12} and \eqref{eq:nxy-1} into \eqref{eq:sigmanw} and we can bound $\sigma_{n}(W)$ as
\begin{equation} \label{eq:sigmanwf}
\begin{split}
\sigma_{n}(W) &\ge \sigma_{n}(XY^{-1})-\norm{E_{X}Y^{-1}}_{2} \\ &\ge \frac{0.88}{\sqrt{1+\epsilon}}-0.02.\frac{1.16}{\sqrt{1-\epsilon}},
\end{split}
\end{equation}
with probability at least $1-p$. With \eqref{eq:w2f} and \eqref{eq:sigmanwf}, we can bound $\kappa_{2}(W)$ as
\begin{equation} \label{eq:k2wf}
\begin{split}
\kappa_{2}(W) &\le \frac{\norm{W}_{2}}{\sigma_{n}(W)} \\ &\le \frac{\frac{1.16}{\sqrt{1-\epsilon}}}{\frac{0.88}{\sqrt{1+\epsilon}}-0.02 \cdot \frac{1.16}{\sqrt{1-\epsilon}}} \\ &\le \frac{1.16}{0.88 \cdot \sqrt{\frac{1-\epsilon}{1+\epsilon}}-0.03} \\ &= a,
\end{split}
\end{equation}
with probability at least $1-p$. Here, $a=\frac{1.16}{0.88 \cdot \sqrt{\frac{1-\epsilon}{1+\epsilon}}-0.03}$. When \eqref{eq:k2wf} satisfies Assumption~\ref{assumption:2}, $a \le \frac{1}{8\sqrt{mn\uu+n(n+1)\uu}}$. With the theoretical results in \cite{error}, we can bound the orthogonality of RCholeskyQR2 for sparse matrices, $\norm{Q^{\top}Q-I_{n}}_{F}$ as
\begin{equation} 
\begin{split}
\norm{Q^{\top}Q-I_{n}}_{F} &\le \frac{5}{64} \cdot 64(\kappa_{2}(W))^{2} \cdot (mn\uu+n(n+1)\uu) \nonumber \\ &\le 5a^{2} \cdot (mn\uu+n(n+1)\uu), \nonumber
\end{split}
\end{equation}
with probability at least $1-p$. \eqref{eq:372} is proved.

\subsubsection{The upper bound of residual}
Regarding the residual, according to \eqref{eq:310} and \eqref{eq:311}, we can have
\begin{equation}
\begin{split}
QR-X &= (W+E_{3})Z^{-1}(ZY-E_{4})-X \nonumber \\ &= (W+E_{3})Y-(W+E_{3})Z^{-1}E_{4}-X \nonumber \\ &= WY-X+E_{3}Y-QE_{4} \\ &= E_{X}+E_{3}Y-QE_{4}. \nonumber
\end{split}
\end{equation}
Therefore, it is easy to have
\begin{equation}
\norm{QR-X}_{F} \le \norm{E_{X}}_{F}+\norm{E_{3}}_{F}\norm{Y}_{2}+\norm{Q}_{2}\norm{E_{4}}_{F}. \label{eq:344}
\end{equation}
With \eqref{eq:327} and \eqref{eq:w2f}, we can bound $\norm{E_{X}}_{F}$ as
\begin{equation} \label{eq:exff}
\begin{split}
\norm{E_{X}}_{F} &\le 1.05n\sqrt{n}\uu \cdot \norm{W}_{2} \cdot (\sqrt{1+\epsilon}+1.02m\uu \cdot \norm{\Omega}_{F}) \cdot \sqrt{vt_{1}c_{1}^{2}+nt_{2}c_{2}^{2}} \\ &\le 1.05n\sqrt{n}\uu \cdot (\sqrt{1+\epsilon}+1.02m\uu \cdot \norm{\Omega}_{F}) \cdot j\norm{X}_{2} \cdot \frac{1.16}{\sqrt{1-\epsilon}} \\ &\le \frac{1.22}{\sqrt{1-\epsilon}} \cdot n\sqrt{n}\uu \cdot (\sqrt{1+\epsilon}+1.02m\uu \cdot \norm{\Omega}_{F}) \cdot j\norm{X}_{2}, 
\end{split}
\end{equation}
with probability at least $1-p$. With \eqref{eq:222}, \eqref{eq:es} and \eqref{eq:esk}, we can have 
\begin{equation} \label{eq:a22a}
\begin{split}
\norm{A}_{2} &\le \norm{\Omega_{2}\Omega_{1}X}_{2}+\norm{E_{S}}_{2} \\ &\le \sqrt{1+\epsilon} \cdot \norm{X}_{2}+k\norm{X}_{2} \\ &\le (\sqrt{1+\epsilon}+k) \cdot \norm{X}_{2},
\end{split}
\end{equation}
with probability at least $1-p$. Based on the corresponding theoretical results in \cite{error}, together with \eqref{eq:35}, \eqref{eq:36}, \eqref{eq:31}, \eqref{eq:32} and \eqref{eq:a22a}, we can bound $\norm{Y}_{2}$ as 
\begin{equation} \label{eq:y22a}
\begin{split}
\norm{Y}_{2} &\le 1.02\norm{A}_{2} \\ &\le 1.02 \cdot (\sqrt{1+\epsilon}+k) \cdot \norm{X}_{2},
\end{split}
\end{equation}
with probability at least $1-p$. With \eqref{eq:372}, we can bound $\norm{Q}_{2}$ as 
\begin{equation}
\norm{Q}_{2} \le \sqrt{1+d}, \label{eq:q2f}
\end{equation}
with probability at least $1-p$. Here, we set $d=5a^{2} \cdot (mn\uu+n(n+1)\uu)$. Similar to \eqref{eq:y22a}, together with \eqref{eq:31}, \eqref{eq:32} and \eqref{eq:w2f}, we can bound $\norm{Z}_{2}$ as
\begin{equation} \label{eq:z2f}
\begin{split}
\norm{Z}_{2} &\le 1.02\norm{W}_{2} \\ &\le 1.02 \cdot \frac{1.16}{\sqrt{1-\epsilon}} \\ &\le \frac{1.19}{\sqrt{1-\epsilon}},
\end{split}
\end{equation}
with probability at least $1-p$. With Lemma~\ref{lemma 2.2}, \eqref{eq:310}, \eqref{eq:q2f} and \eqref{eq:z2f}, we can bound $\norm{E_{3}}_{F}$ as
\begin{equation} \label{eq:e3f}
\begin{split}
\norm{E_{3}}_{F} &\le 1.02n\uu \cdot \norm{Q}_{F}\norm{Z}_{F} \\ &\le 1.02n^{2}\uu \cdot \norm{Q}_{2}\norm{Z}_{2} \\ &\le 1.02n^{2}\uu \cdot \sqrt{1+d} \cdot \frac{1.19}{\sqrt{1-\epsilon}} \\ &\le \frac{1.22}{\sqrt{1-\epsilon}} \cdot n^{2}\uu \cdot \sqrt{1+d}, 
\end{split}
\end{equation}
with probability at least $1-p$. With Lemma~\ref{lemma 2.2}, \eqref{eq:311}, \eqref{eq:nyf} and \eqref{eq:z2f}, we can bound $\norm{E_{4}}_{F}$ as
\begin{equation} \label{eq:e4f}
\begin{split}
\norm{E_{4}}_{F} &\le 1.02n\uu \cdot \norm{Y}_{F}\norm{Z}_{F} \\ &\le 1.02n\sqrt{n}\uu \cdot \norm{Y}_{F}\norm{Z}_{2} \\ &\le 1.02n\sqrt{n}\uu \cdot 1.02 \cdot (\sqrt{1+\epsilon}+1.02m\uu \cdot \norm{\Omega}_{F}) \cdot \sqrt{vt_{1}c_{1}^{2}+nt_{2}c_{2}^{2}} \cdot  \frac{1.19}{\sqrt{1-\epsilon}} \\ &\le \frac{1.24}{\sqrt{1-\epsilon}} \cdot n\sqrt{n}\uu \cdot (\sqrt{1+\epsilon}+1.02m\uu \cdot \norm{\Omega}_{F}) \cdot j\norm{X}_{2},
\end{split}
\end{equation}
with probability at least $1-p$. Therefore, we put \eqref{eq:exff}, \eqref{eq:y22a}, \eqref{eq:q2f}, \eqref{eq:e3f} and \eqref{eq:e4f} into \eqref{eq:344} and we can bound $\norm{QR-X}_{F}$ as
\begin{equation}
\begin{split}
\norm{QR-X}_{F} &\le \norm{E_{X}}_{F}+\norm{E_{3}}_{F}\norm{Y}_{2}+\norm{Q}_{2}\norm{E_{4}}_{F} \nonumber \\ &\le \frac{1.22}{\sqrt{1-\epsilon}} \cdot n\sqrt{n}\uu \cdot (\sqrt{1+\epsilon}+1.02m\uu \cdot \norm{\Omega}_{F}) \cdot j\norm{X}_{2} \nonumber \\ &+ \frac{1.22}{\sqrt{1-\epsilon}} \cdot n^{2}\uu \cdot \sqrt{1+d} \cdot 1.02 \cdot (\sqrt{1+\epsilon}+k) \cdot \norm{X}_{2} \nonumber \\ &+ \sqrt{1+d} \cdot \frac{1.24}{\sqrt{1-\epsilon}} \cdot n\sqrt{n}\uu \cdot (\sqrt{1+\epsilon}+1.02m\uu \cdot \norm{\Omega}_{F}) \cdot j\norm{X}_{2} \nonumber \\ &\le \frac{(1.22+1.24\sqrt{1+d})}{\sqrt{1-\epsilon}} \cdot (\sqrt{1+\epsilon}+1.02m\uu \cdot \norm{\Omega}_{F}) \cdot jn\sqrt{n}\uu\norm{X}_{2} \\ &+ \frac{1.24\sqrt{1+d}}{\sqrt{1-\epsilon}} \cdot (\sqrt{1+\epsilon}+k) \cdot n^{2}\uu\norm{X}_{2} \\ &= b\norm{X}_{2}, \nonumber
\end{split}
\end{equation}
with probability at least $1-p$. Here, $b=\frac{(1.22+1.24\sqrt{1+d})}{\sqrt{1-\epsilon}} \cdot (\sqrt{1+\epsilon}+1.02m\uu \cdot \norm{\Omega}_{F}) \cdot jn\sqrt{n}\uu+\frac{1.24\sqrt{1+d}}{\sqrt{1-\epsilon}} \cdot (\sqrt{1+\epsilon}+k) \cdot n^{2}\uu$. \eqref{eq:382} is proved.
\end{proof}
\begin{remark}
Compared with Lemma~\ref{lemma 2.9}, \eqref{eq:372} and \eqref{eq:382} in Theorem~\ref{theorem 3.3} show that RCholeskyQR2 has a level of accuracy similar to that of CholeskyQR2 for sparse cases. \eqref{eq:rw} can also be better than the condition of $\kappa_{2}(X)$ for CholeskyQR2. In the proof of Theorem~\ref{theorem 3.3}, one of the most important steps in \eqref{eq:nyf}. The error bound of $\norm{Y}_{F}$ contains the primary information of the sparse $X$ and will influence the whole proof. With the randomized matrix sketching, the way to prove Theorem~\ref{theorem 3.3} is distinguished from that in \cite{CSparse, error}. 
\end{remark}

\subsection{RHC for sparse matrices}
We have done a theoretical analysis of RCholeskyQR2 before. Here, we can form a rounding error analysis of RHC based on Definition~\ref{def:41} for sparse matrices with the same idea

\subsubsection{Settings of RHC}
Similar to Assumption~\ref{assumption:2}, we make an assumption for RHC.

\begin{assumption}[The assumption for matrix sketching of RHC]
\label{assumption:3}
If $\Omega\in \mathbb{R}^{s\times m}$ is a $(\epsilon,p,n)$ oblivious $l_{2}$-subspace embedding in $\mathbb{R}^{m}$, we let
\begin{equation}
\sqrt{\frac{1-\epsilon}{1+\epsilon}} \ge \frac{128}{11}\sqrt{mn\uu+n(n+1)\uu}+\frac{2}{11}. \nonumber
\end{equation}
\end{assumption}

Also, when $X \in \mathbb{R}^{m\times n}$ is a sparse matrix, we let
\begin{equation} \label{eq:rw1}
\kappa_{2}(X) \le \delta_{2}. 
\end{equation}
Here, we have $\delta_{2}=\min(\frac{\sqrt{1-\epsilon}}{10.2m\uu \cdot j\norm{\Omega}_{F}+10t}, \frac{\sqrt{1-\epsilon}}{11.5m\sqrt{n}\uu \cdot (\sqrt{1+\epsilon} \cdot j+1.02m\uu \cdot j\norm{\Omega}_{F}+t)})$. $j$ is defined in \eqref{eq:rw} and $t=1.02sn\uu \cdot (\sqrt{1+\epsilon} \cdot j+1.02m\uu \cdot j\norm{\Omega}_{F})$.

\subsubsection{Theoretical results of RHC for sparse matrices}
With the settings above, we provide rounding error analysis of RHC for sparse matrices, which corresponds to Theorem~\ref{theorem 3.3}.

\begin{theorem}[Rounding error analysis of RHC for sparse matrices]
\label{theorem 3.4}
When $X \in \mathbb{R}^{m\times n}$ is a sparse matrix with $\mbox{rank}(X)=n$ and $m \ge n$, if \eqref{eq:31}, \eqref{eq:32}, Assumption~\ref{assumption:3} and \eqref{eq:rw1} are satisfied, then for $[Q,R]=\mbox{RHC}(X)$, we have
\begin{align}
\norm{Q^{\top}Q-I_{n}}_{F} &\le g, \nonumber \\
\norm{QR-X}_{F} &\le h\norm{X}_{2}, \nonumber
\end{align}
with probability at least $1-p$. Here, we define $g=5c^{2} \cdot (mn\uu+n(n+1)\uu)$, $c=\frac{16}{11 \cdot \sqrt{\frac{1-\epsilon}{1+\epsilon}}-2}$ and $h=\frac{(1.31+1.34\sqrt{1+g})}{\sqrt{1-\epsilon}} \cdot (\sqrt{1+\epsilon} \cdot j+1.02m\uu \cdot j\norm{\Omega}_{F}+t) \cdot n\sqrt{n}\uu+ \frac{1.34\sqrt{1+g}}{\sqrt{1-\epsilon}} \cdot (\sqrt{1+\epsilon}+1.02m\uu \cdot j\norm{\Omega}_{F}+t) \cdot n^{2}\uu$. $j$ and $t$ are defined in \eqref{eq:rw} and \eqref{eq:rw1}. 
\end{theorem}

The proof of Theorem~\ref{theorem 3.4} is similar to the proof of Lemma~\ref{lemma 2.10} in \cite{Householder} and Theorem~\ref{theorem 3.3}. This can be left to readers for practice.

\section{Some extensions, discussions and comparisons of the theoretical results}
\label{sec:extensions}
In this part, we discuss the comparisons between our theoretical results in this work and some existing results. Some extensions are also provided on the basis of the idea in this work.

\subsection{Comparison between RCholeskyQR2 and CholeskyQR2 for sparse matrices}
In this work, we propose RCholeskyQR2 with a step of matrix sketching and do rounding error analysis for the sparse cases. In fact, similar to the proof of Theorem~\ref{theorem 3.3}, we can have some theoretical results of the deterministic CholeskyQR2 for sparse matrices.

\begin{corollary}[Rounding error analysis of CholeskyQR2 for sparse matrices]
\label{cor:c1}
If $X \in \mathbb{R}^{m\times n}$ is a sparse matrix with $\kappa_{2}(X) \le \delta_{3}$, when \eqref{eq:31} and \eqref{eq:32} are satisfied, we have
\begin{align}
\norm{Q^{\top}Q-I_{n}}_{F} &\le 6(mn\uu+n(n+1)\uu), \nonumber \\
\norm{QR-X}_{F} &\le f\norm{X}_{2}. \nonumber
\end{align}
Here, $\delta_{3}=\frac{1}{8j\sqrt{m\uu+(n+1)\uu}}$ and $f=(2.30j+1.21\sqrt{n}) \cdot n\sqrt{n}\uu$. $j$ is defined in \eqref{eq:rw}.
\end{corollary}

Based on Theorem~\ref{theorem 3.3} and Corollary~\ref{cor:c1}, we provide some comparisons between the theoretical results below in Table~\ref{tab:comparison1}-Table~\ref{tab:comparison2}. Table~\ref{tab:comparison3} shows the computational complexity of the starting steps of the two algorithms, matrix sketching+calculating the gram matrix for RCholeskyQR2 and a single step of calculating the gram matrix for CholeskyQR2, which is the primary difference between these two algorithms. Table~\ref{tab:comparison4} and Table~\ref{tab:comparison5} show the comparison between the requirements of $\kappa_{2}(X)$ of CholeskyQR2 and RHC for different types of matrices.

\begin{table}
\caption{Comparison of the upper bounds of $\kappa_{2}(X)$ of the algorithms}
\centering
\begin{tabular}{||c c||}
\hline
$\mbox{Algorithms}$ & $\mbox{Sparse matrices}$ \\
\hline
$\mbox{RCholeskyQR2}$ & $\delta_{1}$ \\
\hline
$\mbox{CholeskyQR2}$ & $\delta_{3}$ \\
\hline
\end{tabular}
\label{tab:comparison1}
\end{table}

\begin{table}
\caption{Comparison of the error bounds of the algorithms for sparse matrices}
\centering
\begin{tabular}{||c c c||}
\hline
$\mbox{Algorithms}$ & $\norm{Q^{\top}Q-I_{n}}_{F}$ & $\norm{QR-X}_{F}$ \\
\hline
$\mbox{RCholeskyQR2}$ & $d$ & $b\norm{X}_{2}$ \\
\hline
$\mbox{CholeskyQR2}$ & $6(mn\uu+n(n+1)\uu)$ & $f\norm{X}_{2}$ \\
\hline
\end{tabular}
\label{tab:comparison2}
\end{table}

\begin{table}
\caption{The computational complexity of the starting steps for the sparse $X$}
\centering
\begin{tabular}{||c c||}
\hline
$\mbox{Algorithms}$ & $\mbox{Computational complexity}$ \\
\hline
$\mbox{RCholeskyQR2}$ & $smn+sn^{2}$ \\
\hline
$\mbox{CholeskyQR2}$ & $\mbox{(nnze(X))}^{2}$ \\
\hline
\end{tabular}
\label{tab:comparison3}
\end{table}

\begin{table}
\caption{Comparison of the upper bounds of $\kappa_{2}(X)$ of CholeskyQR2 for different types of $X$}
\centering
\begin{tabular}{||c c||}
\hline
$\mbox{Types}$ & $\kappa_{2}(X)$ \\
\hline
$\mbox{Sparse}$ & $\delta_{0}$ \\
\hline
$\mbox{Dense}$ & $\delta_{3}$ \\
\hline
\end{tabular}
\label{tab:comparison4}
\end{table}

\begin{table}
\caption{Comparison of the upper bounds of $\kappa_{2}(X)$ of RHC for different types of $X$}
\centering
\begin{tabular}{||c c||}
\hline
$\mbox{Types}$ & $\kappa_{2}(X)$ \\
\hline
$\mbox{Sparse}$ & $\delta$ \\
\hline
$\mbox{Dense}$ & $\delta_{2}$ \\
\hline
\end{tabular}
\label{tab:comparison5}
\end{table}

There is something interesting in Table~\ref{tab:comparison1}. We find that when we take $s$ relatively close to $n$, \textit{e.g.}, $n+1$ or $2n$, for $X \in \mathbb{R}^{m\times n}$ with $m \ge n$, we have $\delta_{1}>\delta_{3}$. Such a comparison indicates that RCholeskyQR2 has better applicability for ill-conditioned sparse matrices than CholeskyQR2, which is an advantage of matrix sketching which has not occurred in other problems before as far as we know. The numerical experiments in Section~\ref{sec:numerical} confirm our finding. If we combine matrix sketching with SCholeskyQR3, a probabilistic $s$ can be taken as $s=11(s\uu+(n+1)\uu) \cdot k_{1}c^{2}$ for SCholeskyQR3 based on the result in \cite{CSparse} for the sparse cases. Table~\ref{tab:comparison2} shows that RCholeskyQR2 and CholeskyQR2 have the same level of accuracy in both orthogonality and residual, as one of the most important results of this work.

Table~\ref{tab:comparison3} shows that when the input $X \in \mathbb{R}^{m\times n}$ is sparse, the comparison of the computational complexity between RCholeskyQR2 and CholeskyQR2 depends on several factors, including $s$ that satisfies $m$, $n$, $n \le s \le m$ and $\mbox{nnze(X)}$. This will be reflected in the experiments of Section~\ref{sec:numerical}. The Gaussian sketch in RCholeskyQR2 may break the sparsity of the original $X$. If we take a different type of sketch, such as multi-sketching with the CountSketch+the Gaussian sketch \cite{Householder, LHC} for $X \in \mathbb{R}^{m\times n}$ with $m=\mathcal{O}(n^{2})$, RCholeskyQR2 can be more efficient. 

Moreover, when comparing Theorem~\ref{theorem 3.4} and Corollary~\ref{cor:c1} with Lemma~\ref{lemma 2.9} and Lemma~\ref{lemma 2.10}, we can find that for the same algorithm, the sparse and common input $X$ will have different results of both $\kappa_{2}(X)$ and accuracy. This can be reflected in some numerical experiments in Section~\ref{sec:numerical}.

\subsection{Comparison between the applicability for different types of matrices}
In this work, we focus on two typical CholeskyQR-type algorithms, CholeskyQR2 and RHC. Theoretical results of them for the sparse cases are shown in Theorem~\ref{theorem 3.4} and Corollary~\ref{cor:c1}. Comparisons between the bounds of $\kappa_{2}(X)$ for different types of $X$ are shown in Table~\ref{tab:comparison4} and Table~\ref{tab:comparison5}. 

These two tables show that the better conditions of $\kappa_{2}(X)$ hold when we use CholeskyQR-type algorithms for sparse matrices. The existing algorithms can deal with more ill-conditioned sparse cases than dense cases. Some experiments are taken in Section~\ref{sec:numerical} to verify the theoretical results.

\subsection{Comparison between different models of sparsity}
We have proposed a model of sparse matrices in \cite{CSparse} based on $\mbox{nnze(X)}$ and the element with the highest absolute value of the input $X$. In this work, a new model based on the existence of dense columns and different types of elements is built in Definition~\ref{def:41}. 

Generally speaking, they are two different ways to bound $\norm{X}_{F}$ based on its structure. Recall the model we used in \cite{CSparse}, we let $k_{1}$ be the number of the non-zero elements of the sparse $X \in \mathbb{R}^{m\times n}$ with $n \le k_{1} \le mn$ when $\mbox{rank}(X)=n$. We also take $c$ as the element with the largest absolute value of $X$. Compared with the model in Definition~\ref{def:41}, we can have $c=\max(c_{1}, c_{2})$. In fact, for SCholeskyQR3, we need to consider the CPU time of taking the shifted item $s$. Although we can take $s=11(m\uu+(n+1)\uu) \cdot (vt_{1}c_{1}^{2}+nt_{2}c_{2}^{2})$ for SCholeskyQR3 theoretically, it is very expensive and time-consuming compared to the choice of $s=11(m\uu+(n+1)\uu) \cdot k_{1}c^{2}$. However, for this work focusing on analysis, the model in Definition~\ref{def:41} can describe the properties of $X$ better when there are a few very large elements in $X$, avoiding potential overestimation due to the absolute values to some extent. We can take different models according to different cases in real problems.

\section{Numerical experiments}
\label{sec:numerical}
In this section, we conduct numerical experiments regarding RCholeskyQR2 and RHC using MATLAB R2023A on our laptop. We show the specifications of our computer below in Table~\ref{tab:C}. We perform numerical experiments for the comparison between RCholeskyQR2 and CholeskyQR2 from  different perspectives, together with the performance of the existing CholeskyQR2 and RHC for different types of $X \in \mathbb{R}^{m\times n}$. We primarily consider the applicability, accuracy, efficiency, and robustness of the algorithms. For the numerical results with respect to the randomized algorithms, we perform the experiments for several times and take the average value. 

\begin{table}
\begin{center}
\caption{The specifications of our computer}
\centering
\begin{tabular}{c|c}
\hline
Item & Specification\\
\hline \hline
System & Windows 11 family(10.0, Version 22000) \\
BIOS & GBCN17WW \\
CPU & Intel(R) Core(TM) i5-10500H CPU @ 2.50GHz  -2.5 GHz \\
Number of CPUs / node & 12 \\
Memory size / node & 8 GB \\
Direct Version & DirectX 12 \\
\hline
\end{tabular}
\label{tab:C}
\end{center}
\end{table}

\subsection{Comparison between RCholeskyQR2 and CholeskyQR2 for sparse matrices}
In this part, we do numerical experiments regarding RCholeskyQR2 and CholeskyQR2 for sparse matrices.

\subsubsection{The applicability and accuracy of the algorithms}
We start with the applicability and accuracy of the algorithms. In this group of numerical experiments, we use the arrowhead matrix which is widely used in many areas in both academia and industry, including graph theory, control theory, and certain eigenvalue problems \cite{Constructing, Li, Eigen}. Unlike many sparse matrices, the arrowhead matrix has dense columns, which means that $v>0$ in Definition~\ref{def:41}. In this part, we define $e_{1ns}=(1,0,0, \cdots, 0,0)^{\top} \in \mathbb{R}^{20}$, $e_{1zs}=(0,1,1, \cdots, 1,1)^{\top} \in \mathbb{R}^{20}$, together with a diagonal matrix $E={\rm diag}(1, \alpha^{\frac{1}{19}}, \cdots, \alpha^{\frac{18}{19}}, \alpha) \in \mathbb{R}^{20\times 20}$. Here, $\sigma$ is a small positive constant with $0<\sigma<1$. Therefore, a small matrix $B \in \mathbb{R}^{20\times 20}$ is built as 
\begin{equation}
B=-5e_{1ns} \cdot e_{1zs}^{\top}-10e_{1zs} \cdot e_{1ns}^{\top}+E. \nonumber
\end{equation}
We build $X \in \mathbb{R}^{20000\times 20}$ with $1000$ $B$ as 
\begin{equation}
X=
\begin{pmatrix}
B \\
B \\
\vdots \\
B \nonumber
\end{pmatrix}.
\end{equation} 
We use $X$ in the block version because it is easy for us to control $\kappa_{2}(X)$ through $\kappa_{B}$

In the following, we do numerical experiments of RCholeskyQR2 and CholeskyQR2 for such an $X$. For RCholeskyQR2, we take $\epsilon=0.5$ and $p=0.6$. Therefore, Assumption~\ref{assumption:2} is satisfied. For RCholeskyQR2, we take $s=200$. We vary $\alpha$ from $10^{-1}$, $10^{-2}$, $10^{-4}$, $10^{-5}$ to $8 \cdot 10^{-2}$ to adjust $\kappa_{2}(X)$. The numerical results on applicability and accuracy are shown in Table~\ref{tab:O} and Table~\ref{tab:R}.

\begin{table}
\caption{Comparison of orthogonality between RCholeskyQR2 and CholeskyQR2 for the sparse $X$}
\centering
\begin{tabular}{||c c c c c c||}
\hline
$\kappa_{2}(X)$ & $419.92$ & $3.99e+03$ & $3.51e+05$ & $3.07e+07$ & $1.30e+09$ \\
\hline
RCholeskyQR2 & $7.67e-15$ & $7.31e-15$ & $9.56e-15$ & $8.38e-15$ & $8.49e-15$ \\
\hline
CholeskyQR2 & $2.51e-15$ & $3.86e-15$ & $7.22e-15$ & $5.71e-15$ & $-$ \\
\hline
\end{tabular}
\label{tab:O}
\end{table}

\begin{table}
\caption{Comparison of orthogonality between RCholeskyQR2 and CholeskyQR2 for the sparse $X$}
\centering
\begin{tabular}{||c c c c c c||}
\hline
$\kappa_{2}(X)$ & $419.92$ & $3.99e+03$ & $3.51e+05$ & $3.07e+07$ & $1.30e+09$ \\
\hline
RCholeskyQR2 & $1.71e-13$ & $5.24e-13$ & $2.82e-13$ & $3.08e-13$ & $2.69e-13$ \\
\hline
CholeskyQR2 & $2.81e-13$ & $3.10e-13$ & $3.11e-13$ & $2.91e-13$ & $-$ \\
\hline
\end{tabular}
\label{tab:R}
\end{table}

According to Table~\ref{tab:O} and Table~\ref{tab:R}, we find that the accuracy of RCholeskyQR2 and CholeskyQR2 is in a similar level, including both orthogonality and residual, which corresponds to Table~\ref{tab:comparison2}. RCholeskyQR2 exhibits better applicability than CholeskyQR2 in this example, which is also reflected in Table~\ref{tab:comparison1}. This is one of the new phenomena of this work, showing a special advantage of the randomized technique in CholeskyQR.

\subsubsection{Robustness of the algorithms}
In this group of numerical experiments, we test the robustness of the algorithms for sparse matrices. This is mainly influenced by $\kappa_{2}(X)$. We still use the arrowhead matrix as shown before. For RCholeskyQR2, we let $s=200$. We vary $\alpha$ in order to change $\kappa_{2}(X)$. We test the times of success every $30$ times for RCholeskyQR2 and CholeskyQR2. Numerical experiments are shown in Table~\ref{tab:T}.

\begin{table}
\caption{Comparison of times of success for RCholeskyQR2 and CholeskyQR2}
\centering
\begin{tabular}{||c c c c c c||}
\hline
$\kappa_{2}(X)$ & $419.92$ & $3.99e+03$ & $3.51e+05$ & $3.07e+07$ & $1.30e+09$ \\
\hline
RCholeskyQR2 & $30$ & $30$ & $30$ & $30$ & $12$ \\
\hline
CholeskyQR2 & $30$ & $30$ & $30$ & $30$ & $0$ \\
\hline
\end{tabular}
\label{tab:T}
\end{table}

Table~\ref{tab:T} shows that RCholeskyQR2 can deal with more ill-conditioned cases with a certain probability. This property of the randomized algorithms is shown in this group of experiments for the first time as far as we know and is distinguished from the deterministic cases. This is in accordance with the results of Theorem~\ref{theorem 3.3} and our choice of $p$ in this group of experiments.

\subsubsection{CPU time of the algorithms}
Here, we test CPU time (s) of RCholeskyQR2 and CholeskyQR2 for sparse matrices. We would like to see the potential influence of $m$, $n$, $s$ and $\mbox{nnze(X)}$. Unlike previous numerical experiments, we use $\mbox{Sprand}$ in MATLAB to generate the input matrix $X$. To test the influence of $m$, we fix $n=50$ and $h=0.05$. To test the influence of $n$, we fix $m=2000$ and $h=0.05$. We keep $\kappa_{2}(X)=10^{6}$, $h=\frac{\mbox{nnze(X)}}{mn}=0.05$ and $s=200$. To see the influence of $s$, we fix $m=2000$, $n=50$, $\kappa_{2}(X)=10^{6}$ and $h=0.05$. We vary $s$ from $50$, $100$, $200$, $500$ to $1000$ and perform numerical experiments. With different $\mbox{nnze(X)}$, we keep $m=2000$, $n=50$, $\kappa_{2}(X)=10^{6}$ and $s=200$. The numerical results are shown below in Table~\ref{tab:t1}-Table~\ref{tab:t4}.

\begin{table}
\caption{Comparison of CPU time for all the algorithms with different $m$ for the sparse $X$}
\centering
\begin{tabular}{||c c c c c c||}
\hline
$m$ & $200$ & $500$ & $1000$ & $2000$ & $4000$ \\
\hline
RCholeskyQR2 & $0.001$ & $0.001$ & $0.003$ & $0.005$ & $0.017$ \\
\hline
CholeskyQR2 & $0.003$ & $0.003$ & $0.004$ & $0.006$ & $0.016$ \\
\hline
\end{tabular}
\label{tab:t1}
\end{table}

\begin{table}
\caption{Comparison of CPU time for all the algorithms with different $n$ for the sparse $X$}
\centering
\begin{tabular}{||c c c c c c||}
\hline
$n$ & $50$ & $100$ & $200$ & $500$ & $1000$ \\
\hline
RCholeskyQR2 & $0.004$ & $0.005$ & $0.006$ & $0.011$ & $0.027$ \\
\hline
CholeskyQR2 & $0.001$ & $0.001$ & $0.006$ & $0.038$ & $0.135$ \\
\hline
\end{tabular}
\label{tab:t2}
\end{table}

\begin{table}
\caption{Comparison of CPU time for all the algorithms with different $s$ for the sparse $X$}
\centering
\begin{tabular}{||c c c c c c||}
\hline
$s$ & $50$ & $100$ & $200$ & $500$ & $1000$ \\
\hline
RCholeskyQR2 & $0.002$ & $0.003$ & $0.005$ & $0.014$ & $0.025$ \\
\hline
CholeskyQR2 & $0.006$ & $0.006$ & $0.006$ & $0.006$ & $0.006$ \\
\hline
\end{tabular}
\label{tab:t3}
\end{table}

\begin{table}
\caption{Comparison of CPU time for all the algorithms with different $\mbox{nnze(X)}$ for the sparse $X$}
\centering
\begin{tabular}{||c c c c c c||}
\hline
$\mbox{nnze(X)}$ & $0.01$ & $0.02$ & $0.05$ & $0.1$ & $0.2$ \\
\hline
RCholeskyQR2 & $0.005$ & $0.005$ & $0.005$ & $0.006$ & $0.008$ \\
\hline
CholeskyQR2 & $0.001$ & $0.002$ & $0.006$ & $0.016$ & $0.024$ \\
\hline
\end{tabular}
\label{tab:t4}
\end{table}

According to Table~\ref{tab:t1}-Table~\ref{tab:t4}, we find that the comparison of efficiency between RCholeskyQR2 and CholeskyQR2 is associated with all of the following factors, $m$, $n$, $s$ and $\mbox{nnze(X)}$. It depends on the real properties of the sparse $X \in \mathbb{R}^{m\times n}$ with the comparison in Table~\ref{tab:comparison3} as $smn+sn^{2}$ versus $\mbox{(nnze(X))}^{2}$. It is an interesting phenomenon which has not occurred in the previous works before.

\subsection{The applicability of the existing algorithms for different types of $X$}
In this part, we show some different phenomena of the existing CholeskyQR2 and RHC for the sparse and dense $X$. We focus mainly on the applicability of the algorithms, which is reflected by $\kappa_{2}(X)$ for the input $X$. For CholeskyQR2, we take the same sparse matrix as in the comparison between RCholeskyQR2 and CholeskyQR2. To see the applicability of CholeskyQR2 for dense matrices, we build a comparison group of a dense matrix. We construct a small $X_{cs} \in \mathbb{R}^{20\times 20}$ using Singular Value Decomposition (SVD). We set
\begin{equation}
X_{cs}=U_{cs} D_{cs} V_{cs}^{T}. \nonumber
\end{equation}
Here, $U_{cs} \in \mathbb{R}^{20\times 20}, V_{cs} \in \mathbb{R}^{20\times 20}$ are orthogonal matrices. $D_{cs}={\rm diag}(1, \sigma^{\frac{1}{19}}, \cdots, \sigma^{\frac{18}{19}}, \sigma) \in \mathbb{R}^{20\times 20}$ is a diagonal matrix with all the singular values of $X_{cs}$. We vary $\sigma$ to control $\kappa_{2}(X_{cs})$. Here, $0<\sigma \le 1$ is a constant. Therefore, we have $\sigma_{1}(X_{cs})=\norm{X_{cs}}_{2}=1$ and $\kappa_{2}(X_{cs})=\frac{1}{\sigma}$. In this way, the dense matrix $X_{c} \in \mathbb{R}^{20000\times 20}$ is constructed with $1000$ $X_{cs}$ as
We build $X_{c} \in \mathbb{R}^{20000\times 20}$ with $1000$ $X_{cs}$ as 
\begin{equation}
X_{c}=
\begin{pmatrix}
X_{cs} \\
X_{cs} \\
\vdots \\
X_{cs} \nonumber
\end{pmatrix}.
\end{equation} 

For RHC, we construct a sparse matrix first. We take $m=2000$, $n=50$ and define some vectors: $e_{1ns}=(1,0,0, \cdots, 0,0)^{\top} \in \mathbb{R}^{50}$, $e_{1zs}=(0,1,1, \cdots, 1,1)^{\top} \in \mathbb{R}^{50}$, $e_{1nb}=(1,0,0, \cdots, 0,0)^{\top} \in \mathbb{R}^{2000}$ and $e_{1zs}=(0,1,1, \cdots, 1,1)^{\top} \in \mathbb{R}^{2000}$, together with a diagonal matrix $E={\rm diag}(1, \theta^{\frac{1}{49}}, \cdots, \theta^{\frac{48}{49}}, \theta) \in \mathbb{R}^{50\times 50}$. Here, $\theta$ is a positive constant with $0<\theta<1$. Moreover, a large matrix $\mathbb{O}_{1950\times 50}$ is formed with all the elements $0$. Therefore, a matrix $P_{sparse} \in \mathbb{R}^{2000\times 50}$ is formed as
\begin{equation}
P_{sparse}=
\begin{pmatrix}
E \\
\mathbb{O}_{1950\times 50} \nonumber
\end{pmatrix}.
\end{equation} 
We build $X \in \mathbb{R}^{2000\times 50}$ as
\begin{equation}
X=-5e_{1nb} \cdot e_{1zs}^{\top}-10e_{1zs} \cdot e_{1ns}^{\top}+P_{sparse}. \nonumber
\end{equation}
For the dense matrix $X_{c}$ as the comparison group, we take an $X_{r} \in \mathbb{R}^{2000\times 50}$, which is generated as $X_{r}=\mbox{rand}(2000,50)/10^{36}$ in MATLAB. Therefore, dense $X_{c}$ is taken as 
\begin{equation}
X_{c}=X+X_{r}. \nonumber
\end{equation}

In this group of numerical experiments, we make a comparison between the sparse matrix $X$ as defined in the previous part and $X_{c}$ to see the applicability of the algorithms for different types of $X$. For CholeskyQR2, we vary $\alpha$ from $0.1$ to $10^{-8}$. We modify $\sigma$ simultaneously to satisfy $\kappa_{2}(X) \approx \kappa_{2}(X_{c})$. For RHC, we vary $\theta$ from $10^{-4}$ to $10^{-20}$. In this case, we find $\kappa_{2}(X) \approx \kappa_{2}(X_{c})$. For these two algorithms, we fix $s=200$. The applicability of the algorithms for different types of $X$  is shown in Table~\ref{tab:od1}-Table~\ref{tab:rd2}. 

\begin{table}
\caption{Comparison of applicability of CholeskyQR2 for the sparse $X$}
\centering
\begin{tabular}{||c c c c c c||}
\hline
$\kappa_{2}(X)$ & $419.92$ & $3.99e+03$ & $3.51e+05$ & $3.07e+07$ & $5.38e+08$ \\
\hline
Orthogonality & $2.51e-15$ & $3.86e-15$ & $7.22e-15$ & $5.71e-15$ & $7.48e-15$ \\
\hline
Residual & $2.81e-13$ & $3.10e-13$ & $3.11e-13$ & $2.91e-13$ & $2.78e-13$ \\
\hline
\end{tabular}
\label{tab:od1}
\end{table}

\begin{table}
\caption{Comparison of applicability of CholeskyQR2 for the dense $X$}
\centering
\begin{tabular}{||c c c c c c||}
\hline
$\kappa_{2}(X)$ & $419.92$ & $3.99e+03$ & $3.51e+05$ & $3.07e+07$ & $5.38e+08$ \\
\hline
Orthogonality & $2.29e-15$ & $2.38e-15$ & $3.13e-15$ & $2.63e-15$ & $-$ \\
\hline
Residual & $9,64e-15$ & $9.42e-15$ & $8.24e-15$ & $7.96e-15$ & $-$ \\
\hline
\end{tabular}
\label{tab:rd1}
\end{table}

\begin{table}
\caption{Comparison of applicability of RHC for the sparse $X$}
\centering
\begin{tabular}{||c c c c c c||}
\hline
$\kappa_{2}(X)$ & $4.13e+06$ & $3.68e+10$ & $3.21e+14$ & $2.76e+18$ & $8.10e+34$ \\
\hline
Applicability & $\mbox{Yes}$ & $\mbox{Yes}$ & $\mbox{Yes}$ & $\mbox{Yes}$ & $\mbox{Yes}$ \\
\hline
\end{tabular}
\label{tab:od2}
\end{table}

\begin{table}
\caption{Comparison of applicability of RHC for the dense $X$}
\centering
\begin{tabular}{||c c c c c c||}
\hline
$\kappa_{2}(X)$ & $4.13e+06$ & $3.68e+10$ & $3.21e+14$ & $2.76e+18$ & $8.10e+34$ \\
\hline
Applicability & $\mbox{Yes}$ & $\mbox{Yes}$ & $\mbox{Yes}$ & $\mbox{Yes}$ & $\mbox{No}$ \\
\hline
\end{tabular}
\label{tab:rd2}
\end{table}

Table~\ref{tab:od1}-Table~\ref{tab:rd2} aligns with Table~\ref{tab:comparison4} and Table~\ref{tab:comparison5}. CholeskyQR-type algorithms may perform differently in dense and sparse cases when some conditions are satisfied. In many cases, CholeskyQR-type algorithms can deal with more ill-conditioned sparse matrices than the common dense matrices. This is due to the structure of the algorithms, especially the connection between the properties of the input $X$ and the $Y$-factor in the preconditioning step. 

\subsection{Comparison between different models of sparsity}
In this part, we make comparison between different models of sparsity as shown in Definition~\ref{def:41} and \cite{CSparse}. We take CholeskyQR2 as an example. According to Definition~\ref{def:41}, they all satisfy $c_{1} \ge c_{2}$. Here, we make a difference with $c_{1}<c_{2}$. We still focus on the arrowhead matrix for RHC as defined above. We change the diagonal matrix $E \in \mathbb{R}^{50\times 50}$ into $E={\rm diag}(1, 10, 10 \cdot \theta^{\frac{1}{48}}, \cdots, 10 \cdot \theta^{\frac{47}{48}}, 10 \cdot \theta) \in \mathbb{R}^{50\times 50}$. We form $P_{sparse} \in \mathbb{R}^{2000\times 50}$ in the same way. We construct $X \in \mathbb{R}^{m\times n}$ as 
\begin{equation}
X=-5e_{1nb} \cdot e_{1zs}^{\top}-e_{1zs} \cdot e_{1ns}^{\top}+P_{sparse}. \nonumber
\end{equation}

For such a $X$, we have $m=2000$, $n=50$, $c_{1}=1$, $c_{2}=10$, $v=1$, $t_{1}=2000$ and $t_{2}=2$ according to Definition~\ref{def:41}. Following the setting in \cite{CSparse}, we have $m=2000$, $n=50$, $k_{1}=2098$ and $c=10$. In the same way as $\delta_{3}$ in Corollary~\ref{cor:c1}, we can get $\delta_{4}=\frac{1}{8j_{1}\sqrt{m\uu+(n+1)\uu}}$ as the sufficient condition of CholeskyQR2 for sparse matrices under the model of sparsity in \cite{CSparse}. Here, $j_{1}=\frac{\sqrt{k_{1}c^{2}}}{\norm{X}_{2}}$. Here, $k_{1}=98$. For the input $X$, we take $\theta=10^{-6}$ and record $\delta_{3}$ and $\delta_{4}$ in Table~\ref{tab:delta}

\begin{table}
\caption{Comparison of $\delta$ between different models of sparsity}
\centering
\begin{tabular}{||c c||}
\hline
$\kappa_{2}(X)$ & $3.90e+06$ \\
\hline
$\delta_{3}$ & $1.07e+05$ \\
\hline
$\delta_{4}$ & $2.56e+04$ \\
\hline
\end{tabular}
\label{tab:delta}
\end{table}

In Table~\ref{tab:delta}, we find that $\delta_{3}>\delta_{4}$ and is closer to $\kappa_{2}(X)$. After the numerical experiment, we find that CholeskyQR2 can deal with this case. Therefore, we can say that in many cases, our model of sparsity proposed in Definition~\ref{def:41} is better than that of \cite{CSparse} and is closer to the real cases, which depends mainly on the comparison between $vt_{1}c_{1}^{2}+nt_{2}c_{2}^{2}$ and $k_{1}c^{2}$. Readers can conduct similar experiments for RHC.

\section{Conclusions}
\label{sec:conclusions}
This work focuses on rounding error analysis of randomized CholeskyQR for sparse matrices. We provide a new model of the sparse matrices and do rounding error analysis of some typical CholeskyQR-type algorithms with matrix sketching for the sparse cases. We compare the theoretical results with our new model with the cases based on the model in \cite{CSparse}. Numerical experiments demonstrate our findings and show some new phenomena of randomized CholeskyQR-type algorithms for the sparse cases, which are distinguished from the common dense cases. We also test the properties of randomized CholeskyQR-type algorithms, including applicability, accuracy, efficiency, and robustness.

\section*{Acknowledgments}
We express our great attitude to Professor Zhonghua Qiao from the Hong Kong Polytechnic University for his kind support and help. We are also grateful for the ideas provided by Professor Tiexiang Li from Southeast University, and for the discussions with her about sparse matrices. Moreover, we appreciate the discussion with  Professor Valeria Simoncini from University of Bologna and Professor Michael Kwok-Po Ng from Hong Kong Baptist University regarding randomized linear algebra and its connections with CholeskyQR.

\section*{Conflict of interest}
The authors declare that they have no conflict of interest.

\section*{Data availability}
The authors declare that all data supporting the findings of this study are available within this article.

\bibliography{references}

\end{document}